\documentclass[reqno]{amsart}
\usepackage{amssymb,latexsym,amsmath,amsthm,enumerate,amsbsy}
\usepackage[mathscr]{eucal}
\usepackage{framed,color,graphicx}
\usepackage{mathrsfs}
\usepackage[all]{xy}
\usepackage{tikz}
\usepackage{cite}
\usepackage{longtable}
\usepackage{ltxtable}
\usepackage{subcaption}
\usepackage{graphicx}
\usepackage{subfloat}
\usepackage{subfig}
\usepackage{tikz}
\usepackage{caption} 
\usepackage{etoolbox} 
\usepackage{tabularx}
\usepackage{eqparbox}
\usepackage{array}

\usepackage{enumitem}

\makeatletter
\@namedef{subjclassname@2020}{\textup{2020} Mathematics Subject Classification}
\makeatother

\usetikzlibrary{positioning,decorations.pathreplacing,patterns,decorations.pathmorphing}
\tikzset{%
element/.style={draw, shape=circle, fill=white, inner sep=1.4pt}
}

\DeclareSymbolFont{bbold}{U}{bbold}{m}{n}
\DeclareSymbolFontAlphabet{\mathbbold}{bbold}

\theoremstyle{plain}
\newtheorem{thm}{Theorem}[section]
\newtheorem{lem}[thm]{Lemma}

\newtheorem{cor}[thm]{Corollary}
\newtheorem{pro}[thm]{Proposition}

\newtheorem{problem}[thm]{Problem}

\theoremstyle{definition}

\newtheorem{remark}[thm]{Remark}

\newcounter{eqbasis}
\newcommand{\bt}{\mathbf{t}}
\newcommand{\bu}{\mathbf{u}}
\newcommand{\bv}{\mathbf{v}}
\newcommand{\bw}{\mathbf{w}}

\begin{document}
\title[The finite basis problem for matrix semirings]
{The finite basis problem for matrix semirings $\mathbf{M}_n(S_7)$}

\author{Jun Jiao}
\address{School of Mathematics, Northwest University, Xi'an, 710127, Shaanxi, P.R. China}
\email{jjunjiao@163.com}

\author{Miaomiao Ren}
\address{School of Mathematics, Northwest University, Xi'an, 710127, Shaanxi, P.R. China}
\email{miaomiaoren@yeah.net}

\subjclass[2020]{16Y60, 03C05, 08B15}
\keywords{semiring, matrix, variety, identity, finite basis problem}

\begin{abstract}
We first prove that two matrix semirings $\mathbf{M}_n(S_1)$ and $\mathbf{M}_n(S_2)$ are equationally equivalent
whenever additively idempotent semirings $S_1$ and $S_2$ are equationally equivalent.
We then prove an embedding theorem for matrix semirings $\mathbf{M}_n(S)$ over an additively idempotent semiring $S$:
for all $n \geq 2$, $\mathbf{M}_n(S)$ embeds into $\mathbf{M}_{n+1}(S)$. This yields an ascending chain of varieties $\mathsf{V}(\mathbf{M}_2(S)) \leq \mathsf{V}(\mathbf{M}_3(S)) \leq \cdots$,
which is strictly ascending when $S$ is the two-element distributive lattice.

Finally, we show that every variety in the interval $[\mathsf{V}(S_c(abc)), \mathsf{V}(\mathbf{M}_n(S_7))]$ is nonfinitely based
(i.e., has no finite basis for its identities),
where $S_c(abc)$ is an eight-element flat semiring and
$S_7$ is the unique nonfinitely based three-element additively idempotent semiring.
Consequently, $\mathbf{M}_n(S_7)$ is nonfinitely based,
yielding an ascending chain $\mathsf{V}(\mathbf{M}_2(S_7)) \leq \mathsf{V}(\mathbf{M}_3(S_7)) \leq \cdots$;
moreover, every variety in $[\mathsf{V}(S_7), \mathsf{V}(\mathbf{M}_n(S_7))]$ is also nonfinitely based,
and this interval contains at least countably infinitely many distinct varieties.
\end{abstract}

\maketitle

\section{Introduction and preliminaries}
By an \emph{additively idempotent semiring} (ai-semiring for short) we mean an algebra $(S, +, \cdot)$ such that
the additive reduct $(S, +)$ is a commutative idempotent semigroup,
the multiplicative reduct $(S, \cdot)$ is a semigroup,
and the distributive laws
\[
x(y+z) \approx xy + xz, \quad (x+y)z \approx xz + yz
\]
hold.

Let $S$ be an ai-semiring. Define a binary relation $\leq$ on $S$ by
\[
a \leq b \Leftrightarrow a + b = b.
\]
Then $\leq$ is a partial order compatible with both addition and multiplication;
hence an ai-semiring is sometimes called a \emph{semilattice-ordered semigroup} \cite{kp}.
Whenever an order on an ai-semiring is mentioned, it always
refers to the order defined above.

The class of ai-semirings includes well-known examples such as
max-plus algebras~\cite{aei}, the flat extensions of groups~\cite{jackson08}, and distributive lattices~\cite{burris1981}.
These and other similar algebras have proven to be a powerful algebraic framework with applications spanning multiple disciplines.
They form the algebraic foundation of tropical geometry~\cite{ms} and tropical linear algebra~\cite{but}, and have also found important applications in theoretical computer science~\cite{go} and information science~\cite{gl}.

One natural way to construct new ai-semirings from a given one is to consider matrix semirings over it.
More specifically,
let $S$ be an ai-semiring, let $n \geq 2$ be an integer, and let $\mathbf{M}_n(S)$ denote the set of all $n \times n$ matrices over $S$.
Then $\mathbf{M}_n(S)$ forms an ai-semiring under the usual matrix addition and multiplication:
for any $A = [a_{ij}]_{n \times n}$ and $B = [b_{ij}]_{n \times n} $ in $\mathbf{M}_n(S)$,
\[
A + B = [a_{ij} + b_{ij}]_{n \times n}, \quad
A \cdot B = \left[ \sum_{k=1}^n a_{ik} b_{kj} \right]_{n \times n}.
\]
Jiao and Ren~\cite{jr2025} observed that $S$ can be embedded into $\mathbf{M}_n(S)$,
and consequently $\mathsf{V}(S)$ is a subvariety of $\mathsf{V}(\mathbf{M}_n(S))$.
We shall show later in this paper (Theorem~\ref{thm1}) that $\mathbf{M}_n(S)$ can be embedded into $\mathbf{M}_{n+1}(S)$;
therefore, $\mathsf{V}(\mathbf{M}_n(S))$ is a subvariety of $\mathsf{V}(\mathbf{M}_{n+1}(S))$.

A \emph{variety} of ai-semirings is a class of ai-semirings
closed under taking subalgebras, homomorphic images, and arbitrary direct products.
By Birkhoff's celebrated theorem~\cite{birkhoff1935},
a class of ai-semirings is a variety if and only if it is an \emph{equational class};
that is, the class of all ai-semirings satisfying a certain set of identities.
A variety is \emph{finitely based} if it possesses a finite equational basis
(i.e., it can be defined by finitely many identities); otherwise, it is \emph{nonfinitely based}.

For an ai-semiring $S$, let $\mathsf{V}(S)$ denote the variety generated by $S$; that is,
the smallest variety containing $S$.
Equivalently, $\mathsf{V}(S)$ consists of all homomorphic images of subalgebras of direct products of copies of $S$
(or simply, the class of all algebras that satisfy every identity that holds in $S$).
Then $S$ and $\mathsf{V}(S)$ satisfy precisely the same identities.
We say that $S$ is finitely based (resp., nonfinitely based)
if the variety $\mathsf{V}(S)$ generated by $S$ is finitely based (resp., nonfinitely based).
The finite basis problem for a class of ai-semirings, one of the central problems in universal algebra,
concerns the classification of its members
according to whether they are finitely based.

Over the past two decades, the finite basis problem for ai-semirings has attracted considerable attention and been extensively studied; see, for example,~\cite{dol07, sr, jrz, dgv25, gjr25, gv2501, zrc, rlzc, rlyc, ryy, rz, yrzs}.
Dolinka~\cite{dol07} found the first example of a nonfinitely based finite ai-semiring.
Dolinka, Gusev and Volkov~\cite{dgv25} solved
the finite basis problem for the endomorphism semirings of finite semilattices.
Shao and Ren~\cite{sr} showed that every algebra in the variety generated by all two-element ai-semirings is finitely based.
Jackson et al.~\cite{jrz} and Zhao et al.~\cite{zrc} provided a complete classification of all three-element ai-semirings with respect to the finite basis property.
It was shown that, up to isomorphism, $S_7$ is the unique nonfinitely based three-element ai-semiring; its addition and multiplication tables are presented in Table~\ref{tbs7}.

This remarkable fact distinguishes $S_7$ among all three-element ai-semirings, making it a uniquely intriguing object.
Despite its seemingly simple appearance, having only three elements and a commutative multiplication, the finite basis problem surrounding $S_7$ turns out to be surprisingly rich and nontrivial.
Consequently, a thorough understanding of its influence on other algebras is of central importance.
Indeed, from different perspectives,
\cite{gjr25, gjrz2, gv2301, gv2302, gv2501, wrz24, yue2026, yue2026b}
have demonstrated that $S_7$ can transmit the nonfinitely based property to many other finite ai-semirings,
further highlighting its special role.
These observations naturally lead to a broader question, which was explicitly posed by Gao et al.~\cite{gjrz2} (a manuscript in preparation, not publicly available):

\begin{problem}\label{prob26052301}
Is every finite ai-semiring whose variety contains $S_7$ nonfinitely based?
\end{problem}

\begin{table}[ht]
\centering
\caption{The Cayley tables of $S_7$} \label{tbs7}
\begin{tabular}{c|ccc}
$+$&1&$a$&$\infty$\\
\hline
$1$&1&$\infty$&$\infty$\\
$a$&$\infty$&$a$&$\infty$\\
$\infty$&$\infty$&$\infty$&$\infty$\\
\end{tabular}\qquad
\begin{tabular}{c|ccc}
$\cdot$&1&$a$&$\infty$\\
\hline
$1$&1&$a$&$\infty$\\
$a$&$a$&$\infty$&$\infty$\\
$\infty$&$\infty$&$\infty$&$\infty$\\
\end{tabular}
\end{table}

In another direction,
Dolinka~\cite{dolinka2009} first considered the finite basis problem for matrix semirings over an ai-semiring.
More specifically, he proved that the matrix semiring $\mathbf{M}_2(D_2)$,
where $D_2=\{0, 1\}$ denotes the two-element distributive lattice with operations
\[
0+0=0, 0+1=1+0=1, 1+1=1, 0\cdot 0=0, 0\cdot 1=1 \cdot 0=0, 1\cdot 1=1,
\]
is inherently nonfinitely based; that is, every locally finite variety
(i.e., a variety in which every finitely generated algebra is finite)
containing $\mathbf{M}_2(D_2)$ is nonfinitely based.
Consequently, $\mathbf{M}_n(D_2)$ is nonfinitely based for all $n\geq 2$.
Based on this, Jiao and Ren~\cite{jr2025} completed the classification of matrix semirings over two-element ai-semirings with respect to the finite basis property.
Recently,
Gusev and Volkov~\cite{gv2510, volkov2024} have been investigating
the same problem for upper triangular matrix semirings over $D_2$.

Inspired by these results,
the present paper focuses on the finite basis problem for the matrix semiring $\mathbf{M}_n(S_7)$ with $n \geq 2$.
We shall show that all of them are nonfinitely based.
In fact, we shall prove a stronger result: every variety contained in $\mathsf{V}(\mathbf{M}_n(S_7))$
that also contains $S_7$ is nonfinitely based,
thereby contributing to Problem~\ref{prob26052301}.

To this end, we recall some basic terminology and notation that will be used throughout the paper.
Let $X$ be a countably infinite set of variables, and let $X^+$ denote the free semigroup over $X$.
An \emph{ai-semiring term} over $X$ is a finite nonempty set of words in $X^+$.
We write a term as a formal sum of its elements; that is,
$\bw=\bu_1+\bu_2+\cdots+\bu_n$ means $\bw=\{\bu_1, \bu_2, \ldots, \bu_n\}$.
The order of the summands in the formal sum is irrelevant,
and repeated occurrences of the same word are identified with a single occurrence.
Two terms are equal if and only if their underlying sets coincide.

The collection of all terms over $X$, denoted by $P_f(X^+)$,
forms an ai-semiring under the usual term addition and multiplication.
By \cite[Theorem 2.5]{kp}, $P_f(X^+)$ is free in the variety of all ai-semirings over $X$.

An \emph{ai-semiring identity} (or simply an identity) over $X$ is a formal expression of the form
\[
\bu\approx \bv,
\]
where $\bu$ and $\bv$ are ai-semiring terms over $X$.
For an ai-semiring $S$ and an ai-semiring identity $\bu\approx \bv$,
we say that $S$ \emph{satisfies} $\bu\approx \bv$ (or $\bu\approx \bv$ \emph{holds} in $S$)
if $\varphi(\bu)=\varphi(\bv)$
for every semiring homomorphism $\varphi\colon P_f(X^+) \to S$.
Note that such homomorphism $\varphi$ is uniquely determined
by its values on $X$, since $P_f(X^+)$ is generated by $X$.

%

A \emph{flat semiring} is an ai-semiring whose multiplicative reduct contains a zero element $0$
and satisfies $a + b = 0$ for any two distinct elements $a, b \in S$.
For example, the three-element algebra $S_7$ is a flat semiring.
Such algebras have played an important role in the development of the finite basis problem for ai-semirings.
Jackson et al.~ \cite[Lemma 2.2]{jrz} observed that a semigroup with a zero element $0$
becomes a flat semiring with $0$ as the additive maximum element if and only if it is $0$-cancellative;
that is, for all $a, b, c \in S$,
\[
ab = ac \neq 0 \Rightarrow b = c \quad \text{and} \quad ba = ca \neq 0 \Rightarrow b = c.
\]

The following algebras constitute an important class of flat semirings.
Let $W$ be a nonempty subset of a free semigroup, and let $S(W)$ denote the set of all nonempty subwords of words in $W$ together with an additional symbol $0$.
Define a binary operation $\cdot$ on $S(W)$ by
\[
0 \cdot 0 = 0 \cdot \mathbf{u} = \mathbf{u} \cdot 0 = 0
\]
for all $\mathbf{u} \in S(W) \setminus \{0\}$, and for $\mathbf{u}, \mathbf{v} \in S(W) \setminus \{0\}$,
\[
\mathbf{u} \cdot \mathbf{v} =
\begin{cases}
\mathbf{u}\mathbf{v}, & \text{if } \mathbf{u}\mathbf{v} \in W^{\leq}, \\
0, & \text{otherwise}.
\end{cases}
\]
Then $(S(W), \cdot)$ is a $0$-cancellative semigroup, and consequently $S(W)$ becomes a flat semiring.
If $W$ consists of a single word $\mathbf{w}$, we simply denote $S(W)$ by $S(\mathbf{w})$.
When starting from a nonempty subset of a free commutative semigroup, the corresponding constructions are denoted by $S_c(W)$ and $S_c(\mathbf{w})$.

The paper is organized as follows.
In Section~2, we establish two foundational theorems on matrix semirings:
the first shows that matrix semirings preserve equational equivalence,
and the second provides an embedding of $\mathbf{M}_n(S)$ into $\mathbf{M}_{n+1}(S)$.
In Section~3, we apply a hypergraph-based criterion to prove that $\mathbf{M}_n(S_7)$ is nonfinitely based for all $n \geq 2$.
In Section~4, we show that the interval $[\mathsf{V}(S_7), \mathsf{V}(\mathbf{M}_2(S_7))]$ contains at least countably infinitely many distinct varieties.
In Section~5, we study the multiplicative reduct of $\mathbf{M}'_n(S_7)$ and establish its $5$-nilpotency.
Finally, Section~6 concludes the paper with a summary and some open problems.

\section{Two theorems on matrix semirings}
In this section, we present two theorems on matrix semirings.
The first concerns equational equivalence:
$\mathbf{M}_n(S_1)$ and $\mathbf{M}_n(S_2)$ are equationally equivalent whenever $S_1$ and $S_2$ are.
The second is an embedding theorem: $\mathbf{M}_n(S)$ embeds into $\mathbf{M}_{n+1}(S)$.
We begin with three auxiliary lemmas for the first theorem.

\begin{lem}\label{lem24}
Let $(S_i)_{i\in I}$ be a family of ai-semirings, and let $n\geq 2$ be an integer.
Then $\mathbf{M}_n\bigl(\prod_{i\in I}S_i\bigr)$ is isomorphic to $\prod_{i\in I}\mathbf{M}_n(S_i)$.
\end{lem}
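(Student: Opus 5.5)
The natural approach is to write down the obvious ``coordinatewise'' map and check that it is a bijective semiring homomorphism. Set $S=\prod_{i\in I}S_i$, with projections $\pi_i\colon S\to S_i$. Define
\[
\Phi\colon \mathbf{M}_n(S)\to \prod_{i\in I}\mathbf{M}_n(S_i),\qquad \Phi\bigl([a_{jk}]_{n\times n}\bigr)=\bigl([\pi_i(a_{jk})]_{n\times n}\bigr)_{i\in I}.
\]
That is, the $i$-th component of $\Phi(A)$ is the matrix obtained by applying $\pi_i$ entrywise to $A$.

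The first step is bijectivity. A matrix over $S$ is determined by its $n^2$ entries, and each entry is determined by its family of coordinates. Conversely, given a family $(A_i)_{i\in I}$ with $A_i=[a^{(i)}_{jk}]$, the matrix whose $(j,k)$ entry is $(a^{(i)}_{jk})_{i\in I}$ is the unique preimage. So $\Phi$ is a bijection; it is just the canonical identification of $(\prod_i S_i)^{n^2}$ with $\prod_i S_i^{n^2}$.

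The second step is compatibility with the operations. Addition is entrywise in both structures, and $\pi_i$ is additive, so the $i$-th component of $\Phi(A+B)$ is $[\pi_i(a_{jk})+\pi_i(b_{jk})]$, which is the $i$-th component of $\Phi(A)+\Phi(B)$. For multiplication, the $(j,k)$ entry of the $i$-th component of $\Phi(AB)$ is
\[
\pi_i\Bigl(\sum_{l=1}^n a_{jl}b_{lk}\Bigr)=\sum_{l=1}^n \pi_i(a_{jl})\,\pi_i(b_{lk}).
\]
This equality holds because $\pi_i$ is a semiring homomorphism, and the sum is finite and nonempty. The right-hand side is exactly the $(j,k)$ entry of $\Phi(A)_i\Phi(B)_i$.

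Each step is routine, and I do not expect a real obstacle. The only point needing care is that the matrix product uses only finite sums, so no additive identity or infinite sums are required. This matters because an ai-semiring need not have a zero.
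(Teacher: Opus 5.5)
Your proposal is correct and matches the paper's proof: both use the coordinatewise map $A\mapsto (A^{(i)})_{i\in I}$, check that it preserves entrywise addition and the finite sums in the matrix product coordinate by coordinate, and get bijectivity by reassembling coordinates. Your phrasing via the projections $\pi_i$ being semiring homomorphisms is just a compact way of writing the paper's explicit coordinate computation.
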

\begin{proof}
For each matrix $A=[a_{st}]_{n\times n} \in \mathbf{M}_n\bigl(\prod_{i\in I}S_i\bigr)$,
we write $a_{st}=\left(a_{st}^{(i)}\right)_{i\in I}$ with $a_{st}^{(i)}\in S_i$.
Define a mapping
\[
\varphi\colon \mathbf{M}_n\left(\prod_{i\in I}S_i\right) \to \prod_{i\in I}\mathbf{M}_n(S_i), \quad A \mapsto \left( A^{(i)} \right)_{i\in I},
\]
where $A^{(i)}=\left[a_{st}^{(i)}\right]_{n\times n}\in \mathbf{M}_n(S_i)$, $i\in I$.

Now let $B=[b_{st}]_{n\times n}$ and $C=[c_{st}]_{n\times n}$ be arbitrary matrices in $\mathbf{M}_n\bigl(\prod_{i\in I}S_i\bigr)$.
Then
\[
\begin{aligned}
\varphi(B+C)
&=\left((B+C)^{(i)}\right)_{i\in I}
 =\left(B^{(i)} + C^{(i)}\right)_{i\in I} \\
&=\left(B^{(i)}\right)_{i\in I}+\left(C^{(i)}\right)_{i\in I}
 =\varphi(B)+\varphi(C).
\end{aligned}
\]
Moreover, we have
\[
(BC)_{st}
=\sum_{k=1}^{n}b_{sk}c_{kt}=\sum_{k=1}^{n}\left(b_{sk}^{(i)}\right)_{i\in I}\left(c_{kt}^{(i)}\right)_{i\in I}
=\sum_{k=1}^{n}\left(b_{sk}^{(i)}c_{kt}^{(i)}\right)_{i\in I}
=\left(\sum_{k=1}^{n}b_{sk}^{(i)}c_{kt}^{(i)}\right)_{i\in I},
\]
which implies that
\[
(BC)^{(i)}=\left[\sum_{k=1}^{n}b_{sk}^{(i)}c_{kt}^{(i)}\right]_{n\times n}
= \left[b_{st}^{(i)}\right]_{n\times n}\left[c_{st}^{(i)}\right]_{n\times n} =B^{(i)}C^{(i)}.
\]
Thus
\[
\varphi(BC)=\left((BC)^{(i)}\right)_{i\in I}=\left(B^{(i)}C^{(i)}\right)_{i\in I}
=\left(B^{(i)}\right)_{i\in I}\left(C^{(i)}\right)_{i\in I}=\varphi(B)\varphi(C).
\]
Consequently, $\varphi$ is a semiring homomorphism.

Finally, we show that $\varphi$ is bijective.
Indeed, if $\varphi(B)=\varphi(C)$, then $B^{(i)}=C^{(i)}$ for all $i$,
and so $b_{st}^{(i)}=c_{st}^{(i)}$ for all $s, t$, and $i$.
This shows that $b_{st}=c_{st}$ for all $s$ and $t$,
and so $B=C$. We have proved that $\varphi$ is injective.
Now let $\left(Y^{(i)}\right)_{i\in I}$ be an arbitrary element of $\prod_{i\in I}\mathbf{M}_n(S_i)$
with $Y^{(i)}=\left[y_{st}^{(i)}\right]_{n\times n}$.
Take a matrix $D=[d_{st}]_{n\times n}$ such that $d_{st}=\left(y_{st}^{(i)}\right)_{i\in I}$.
Then $D$ is a matrix in $\mathbf{M}_n(\prod_{i\in I}S_i)$, and
\[
\varphi(D)=\left(D^{(i)}\right)_{i\in I}=\left(\left[y_{st}^{(i)}\right]_{n\times n}\right)_{i\in I}
=\left(Y^{(i)}\right)_{i\in I}.
\]
Hence $\varphi$ is surjective, and so $\varphi$ is bijective.
Therefore, $\varphi$ is a semiring isomorphism.
We conclude that
$\mathbf{M}_n\bigl(\prod_{i\in I}S_i\bigr)$ is isomorphic to $\prod_{i\in I}\mathbf{M}_n(S_i)$.
\end{proof}

\begin{lem}\label{lem25}
Let $S_1$ and $S_2$ be ai-semirings, and let $n\geq 2$ be an integer.
If $S_1$ is a subsemiring of $S_2$,
then $\mathbf{M}_n(S_1)$ is a subsemiring of $\mathbf{M}_n(S_2)$.
\end{lem}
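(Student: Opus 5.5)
The plan is to verify directly that $\mathbf{M}_n(S_1)$ is a subset of $\mathbf{M}_n(S_2)$ closed under the matrix operations, and that these operations agree with the ones computed in $\mathbf{M}_n(S_2)$. Since $S_1 \subseteq S_2$, every $n\times n$ matrix with entries in $S_1$ is an $n\times n$ matrix with entries in $S_2$, so $\mathbf{M}_n(S_1)\subseteq \mathbf{M}_n(S_2)$ as sets.

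Next, take $A=[a_{st}]_{n\times n}$ and $B=[b_{st}]_{n\times n}$ in $\mathbf{M}_n(S_1)$. The sum $A+B$ computed in $\mathbf{M}_n(S_2)$ has $(s,t)$-entry $a_{st}+b_{st}$, where the addition is that of $S_2$. Because $S_1$ is a subsemiring, its addition is the restriction of that of $S_2$ and $S_1$ is closed under it. Hence this entry lies in $S_1$ and coincides with the sum in $S_1$, and so the sum in $\mathbf{M}_n(S_2)$ equals the sum in $\mathbf{M}_n(S_1)$.

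For the product, the $(s,t)$-entry of $AB$ in $\mathbf{M}_n(S_2)$ is $\sum_{k=1}^n a_{sk}b_{kt}$. Each $a_{sk}b_{kt}$ lies in $S_1$ and equals the product computed in $S_1$, by closure of $S_1$ under multiplication. A finite sum of elements of $S_1$ again lies in $S_1$ and agrees with the $S_1$-sum, by induction on the number of summands using closure under $+$. Thus the product in $\mathbf{M}_n(S_2)$ agrees with the product in $\mathbf{M}_n(S_1)$.

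It follows that $\mathbf{M}_n(S_1)$ is closed under both operations of $\mathbf{M}_n(S_2)$ and carries the induced structure, hence is a subsemiring. There is no real obstacle here. The only point requiring care is that the operations of the subsemiring $\mathbf{M}_n(S_1)$ are the restrictions of those of $\mathbf{M}_n(S_2)$, not merely that $\mathbf{M}_n(S_1)$ is closed under them; this is exactly the entrywise agreement checked above.
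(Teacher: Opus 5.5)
Your proposal is correct, and it is exactly the routine entrywise verification the paper has in mind; the paper's proof only says that the check is routine and omits the details. Your remark that the operations of $\mathbf{M}_n(S_1)$ must agree with those of $\mathbf{M}_n(S_2)$, not merely be closed under them, is the right point to make explicit.
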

\begin{proof}
The verification is routine; we omit the details.
\end{proof}

\begin{lem}\label{lem26}
Let $S_1$ and $S_2$ be ai-semirings, and let $n\geq 2$ be an integer.
If $S_1$ is a homomorphic image of $S_2$,
then $\mathbf{M}_n(S_1)$ is a homomorphic image of $\mathbf{M}_n(S_2)$.
\end{lem}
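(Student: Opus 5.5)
Let $\psi\colon S_2\to S_1$ be a surjective semiring homomorphism. We apply $\psi$ entrywise, defining
\[
\Psi\colon \mathbf{M}_n(S_2)\to \mathbf{M}_n(S_1),\quad [a_{st}]_{n\times n}\mapsto [\psi(a_{st})]_{n\times n},
\]
and check that $\Psi$ is a surjective semiring homomorphism. The argument runs parallel to the proof of Lemma~\ref{lem24}, with the coordinate projections replaced by the single map $\psi$.

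First, additivity. For $B=[b_{st}]$ and $C=[c_{st}]$ in $\mathbf{M}_n(S_2)$, the $(s,t)$-entry of $\Psi(B+C)$ is $\psi(b_{st}+c_{st})=\psi(b_{st})+\psi(c_{st})$, because $\psi$ preserves addition. This is exactly the $(s,t)$-entry of $\Psi(B)+\Psi(C)$.

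Second, multiplicativity. The $(s,t)$-entry of $\Psi(BC)$ is
\[
\psi\Bigl(\sum_{k=1}^n b_{sk}c_{kt}\Bigr)=\sum_{k=1}^n\psi(b_{sk})\psi(c_{kt}).
\]
The equality holds because $\psi$ preserves finite sums (induction on the number of summands) and products. The right-hand side is the $(s,t)$-entry of $\Psi(B)\Psi(C)$.

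Third, surjectivity. Given $Y=[y_{st}]\in\mathbf{M}_n(S_1)$, choose for each pair $(s,t)$ some $x_{st}\in S_2$ with $\psi(x_{st})=y_{st}$. This is possible since $\psi$ is onto, and only finitely many choices are needed. Then $\Psi([x_{st}])=Y$, so $\Psi$ is onto.

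None of these steps is a real obstacle; everything is a routine entrywise verification. The only point needing any care is that $\psi$ commutes with the finite sum $\sum_{k=1}^n$ in the product formula, which follows from additivity by induction on $n$. Note that no neutral elements are involved, so the fact that ai-semirings need not have a zero or identity causes no difficulty.
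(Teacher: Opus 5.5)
Your proposal is correct and takes the same approach as the paper: apply the epimorphism entrywise and check that this gives a surjective semiring homomorphism, which the paper simply calls routine and you spell out. Your direction $\psi\colon S_2\to S_1$ is the right one for the stated hypothesis; the paper's proof writes the epimorphism as $S_1\to S_2$, which is a slip on its part.
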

\begin{proof}
Suppose that $\varphi\colon S_1 \to S_2$ is a semiring epimorphism. Consider the mapping
\[
\psi \colon \mathbf{M}_n(S_1) \to \mathbf{M}_n(S_2), \quad [a_{ij}]_{n\times n} \mapsto [\varphi(a_{ij})]_{n\times n}.
\]
It is a routine matter to check that $\psi$ is a semiring epimorphism.
\end{proof}

We now state the first theorem on matrix semirings.

\begin{thm}\label{thm27}
Let $S_1$ and $S_2$ be ai-semirings, and let $n\geq 2$ be an integer.
If $\mathsf{V}(S_1)=\mathsf{V}(S_2)$, then $\mathsf{V}(\mathbf{M}_n(S_1))=\mathsf{V}(\mathbf{M}_n(S_2))$.
\end{thm}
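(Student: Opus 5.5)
By symmetry it is enough to prove one inclusion, $\mathsf{V}(\mathbf{M}_n(S_1))\subseteq\mathsf{V}(\mathbf{M}_n(S_2))$; the reverse inclusion follows by swapping the roles of $S_1$ and $S_2$. Since $\mathsf{V}(\mathbf{M}_n(S_2))$ is a variety containing $\mathbf{M}_n(S_2)$, it suffices to show that $\mathbf{M}_n(S_1)$ belongs to it. For that we use the HSP description of $\mathsf{V}(S_2)$, together with Lemmas~\ref{lem24}, \ref{lem25} and \ref{lem26}, which say that the functor $\mathbf{M}_n(-)$ is compatible with $\mathsf{P}$, $\mathsf{S}$ and $\mathsf{H}$ respectively.

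The argument runs as follows. Since $S_1\in\mathsf{V}(S_1)=\mathsf{V}(S_2)$, Tarski's HSP theorem gives an index set $I$, a subsemiring $T\leq S_2^I$, and a surjective homomorphism $T\to S_1$.

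1. By Lemma~\ref{lem24}, $\mathbf{M}_n(S_2^I)\cong\mathbf{M}_n(S_2)^I$, which lies in $\mathsf{V}(\mathbf{M}_n(S_2))$.
2. By Lemma~\ref{lem25}, $\mathbf{M}_n(T)$ is a subsemiring of $\mathbf{M}_n(S_2^I)$, so $\mathbf{M}_n(T)\in\mathsf{V}(\mathbf{M}_n(S_2))$.
3. By Lemma~\ref{lem26}, applied to the epimorphism $T\to S_1$, the entrywise map $\mathbf{M}_n(T)\to\mathbf{M}_n(S_1)$ is an epimorphism. Hence $\mathbf{M}_n(S_1)\in\mathsf{V}(\mathbf{M}_n(S_2))$.

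Because varieties are closed under $\mathsf{H}$, $\mathsf{S}$, $\mathsf{P}$ and isomorphism, this gives the desired inclusion.

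There is no real obstacle here. The one point needing care is Lemma~\ref{lem26}: its proof as written names the epimorphism in the direction $S_1\to S_2$, whereas the statement requires it in the direction $S_2\to S_1$. We apply it with the correct orientation, the epimorphism going from $T$ onto $S_1$. Since matrix addition and multiplication are built from the semiring operations entrywise, the induced map on matrices is a homomorphism, and it is surjective because every matrix over $S_1$ lifts entrywise to a matrix over $T$.
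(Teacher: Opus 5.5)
Your proof is correct and is exactly the argument the paper compresses into ``follows immediately from Lemmas~\ref{lem24}, \ref{lem25}, and \ref{lem26}''. The witness $S_1\in\mathsf{HSP}(S_2)$ is transported entrywise to $\mathbf{M}_n(S_1)\in\mathsf{HSP}(\mathbf{M}_n(S_2))$, and symmetry gives the reverse inclusion. You are also right that the proof of Lemma~\ref{lem26} names the epimorphism as $S_1\to S_2$ while its statement needs $S_2\to S_1$; this slip is harmless, and your orientation is the correct one.
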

\begin{proof}
This follows immediately from Lemmas~\ref{lem24}, \ref{lem25}, and \ref{lem26}.
\end{proof}

The second theorem is an embedding theorem,
which allows us to compare the varieties $\mathsf{V}(\mathbf{M}_n(S))$ for different $n$.

\begin{thm}[Embedding Theorem]\label{thm1}
Let $S$ be an ai-semiring, and let $n\geq 2$ be an integer.
Then the matrix semiring $\mathbf{M}_n(S)$ can be embedded into $\mathbf{M}_{n+1}(S)$.
\end{thm}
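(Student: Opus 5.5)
The plan is to embed $\mathbf{M}_n(S)$ into $\mathbf{M}_{n+1}(S)$ by duplicating the last row and the last column. We cannot pad with zeros or with an identity, because $S$ is an arbitrary ai-semiring and may have neither. Additive idempotency is exactly what lets a repeated summand be absorbed. Concretely, let $f\colon\{1,\ldots,n+1\}\to\{1,\ldots,n\}$ be the surjection with $f(i)=i$ for $i\leq n$ and $f(n+1)=n$. Define
\[
\Phi\colon \mathbf{M}_n(S)\to\mathbf{M}_{n+1}(S),\quad [a_{ij}]_{n\times n}\mapsto [a_{f(i)f(j)}]_{(n+1)\times(n+1)}.
\]
So $\Phi(A)$ is $A$ with its $n$-th row copied as row $n+1$ and its $n$-th column copied as column $n+1$; the new corner entry is $a_{nn}$.

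The steps, in order, are as follows.

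\emph{Additivity.} This is immediate, since addition is entrywise: $\Phi(A+B)_{ij}=a_{f(i)f(j)}+b_{f(i)f(j)}$.

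\emph{Injectivity.} Because $f$ restricted to $\{1,\ldots,n\}$ is the identity, the upper-left $n\times n$ block of $\Phi(A)$ is $A$ itself.

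\emph{Multiplicativity.} This is the only step with content. For $i,j\leq n+1$ we compute
\[
(\Phi(A)\Phi(B))_{ij}=\sum_{k=1}^{n+1}a_{f(i)f(k)}\,b_{f(k)f(j)} .
\]
As $k$ runs over $\{1,\ldots,n+1\}$, the index $f(k)$ runs over all of $\{1,\ldots,n\}$, because $f$ is surjective. The value $l=n$ is hit twice, by $k=n$ and $k=n+1$. Since $x+x=x$ in $S$ and addition is commutative, the duplicate summand $a_{f(i)n}b_{nf(j)}$ collapses. The sum therefore equals $\sum_{l=1}^{n}a_{f(i)l}b_{lf(j)}=(AB)_{f(i)f(j)}=\Phi(AB)_{ij}$.

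The multiplicativity step is where the real work lies, and it is also where the hypothesis that $S$ is additively idempotent is indispensable. Over a general semiring the extra summand would double the entry, and the map would not be a homomorphism. Once that check is done, $\Phi$ is an injective semiring homomorphism, which proves the theorem. As a consequence, $\mathsf{V}(\mathbf{M}_n(S))\subseteq\mathsf{V}(\mathbf{M}_{n+1}(S))$.
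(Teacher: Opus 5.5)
Your proposal is correct and uses exactly the paper's map: duplicate the last row and the last column, with new corner entry $a_{nn}$. The paper checks multiplicativity in four cases, according to whether $i$ and $j$ equal $n+1$. Your surjection $f\colon\{1,\ldots,n+1\}\to\{1,\ldots,n\}$ covers all four cases in one computation, with additive idempotency absorbing the repeated summand just as in the paper.
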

\begin{proof}
Define a mapping $\varphi\colon \mathbf{M}_{n}(S) \to \mathbf{M}_{n+1}(S)$ by
\[
\varphi(A)=
\begin{bmatrix}
a_{11} & a_{12} & \cdots & a_{1n} & a_{1n}\\
a_{21} & a_{22} & \cdots & a_{2n} & a_{2n}\\
\vdots & \vdots & \ddots & \vdots & \vdots\\
a_{n1} & a_{n2} & \cdots & a_{nn} & a_{nn}\\
a_{n1} & a_{n2} & \cdots & a_{nn} & a_{nn}
\end{bmatrix},
\]
where $A=[a_{ij}]_{n\times n}$ with $a_{ij}\in S$;
that is, $\varphi(A)$ is obtained from $A$ by appending a copy of the last column and a copy of the last row,
and setting the new diagonal entry to $a_{nn}$.
It is straightforward to see that $\varphi$ is injective and preserves addition.

It remains to show that $\varphi$ preserves multiplication.
The verification mainly relies on the idempotence of addition in $S$.
Let $A=[a_{ij}]_{n\times n}$ and $B=[b_{ij}]_{n\times n}$ be arbitrary matrices in $\mathbf{M}_{n}(S)$.
For any $1 \leq i, j \leq n$, we consider four cases based on the positions of $i$ and $j$.

\textbf{Case 1.} $1 \leq i, j \leq n$. Then
\[
(\varphi(A)\varphi(B))_{ij} = \left(\sum_{k=1}^{n} a_{ik} b_{kj}\right) + a_{in} b_{nj}
=\sum_{k=1}^{n} a_{ik} b_{kj}=(AB)_{ij}=(\varphi(AB))_{ij}.
\]

\textbf{Case 2.} $1 \leq i \leq n$, $j = n+1$. Then
\[
(\varphi(A)\varphi(B))_{i, n+1} = \left(\sum_{k=1}^{n} a_{ik} b_{kn}\right) + a_{in} b_{nn}
=\sum_{k=1}^{n} a_{ik} b_{kn}= (AB)_{in}=(\varphi(AB))_{i, n+1}.
\]

\textbf{Case 3.} $i = n+1$, $1 \leq j \leq n$. Then
\[
(\varphi(A)\varphi(B))_{n+1, j} = \left(\sum_{k=1}^{n} a_{nk} b_{kj}\right) + a_{nn} b_{nj}
=\sum_{k=1}^{n} a_{nk} b_{kj} = (AB)_{nj}=(\varphi(AB))_{n+1, j}.
\]

\textbf{Case 4.} $i = j = n+1$. Then
\begin{align*}
(\varphi(A)\varphi(B))_{n+1,n+1}
&=\left(\sum_{k=1}^{n} a_{nk} b_{kn}\right) + a_{nn} b_{nn} =\sum_{k=1}^{n} a_{nk} b_{kn}\\
&=(AB)_{nn}=(\varphi(AB))_{n+1, n+1}.
\end{align*}

This shows that $\varphi$ preserves multiplication.
Hence $\varphi$ is a semiring monomorphism,
and so $\mathbf{M}_{n}(S)$ can be embedded into $\mathbf{M}_{n+1}(S)$.
\end{proof}

Let $\mathcal{V}$ and $\mathcal{W}$ be varieties of ai-semirings.
Then $\mathcal{W}$ is a \textit{subvariety} of $\mathcal{V}$, denoted by $\mathcal{W} \leq \mathcal{V}$,
if $\mathcal{W}$ is contained in $\mathcal{V}$.
Moreover, $\mathcal{W}$ is a \textit{proper subvariety} of $\mathcal{V}$, denoted by $\mathcal{W} < \mathcal{V}$, if $\mathcal{W}$ is a subvariety of $\mathcal{V}$ and $\mathcal{W} \neq \mathcal{V}$.
\begin{cor}\label{coro26052420}
Let $S$ be an ai-semiring. Then
\begin{equation}\label{2026052401}
\mathsf{V}(S) \leq \mathsf{V}(\mathbf{M}_{2}(S)) \leq \cdots \leq\mathsf{V}(\mathbf{M}_{n}(S))\leq \mathsf{V}(\mathbf{M}_{n+1}(S))\leq\cdots.
\end{equation}
\end{cor}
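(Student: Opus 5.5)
The chain of inclusions in Corollary~\ref{coro26052420} splits into two kinds of links: the first link $\mathsf{V}(S)\leq\mathsf{V}(\mathbf{M}_2(S))$, and the links $\mathsf{V}(\mathbf{M}_n(S))\leq\mathsf{V}(\mathbf{M}_{n+1}(S))$ for $n\geq 2$. Both rest on the same principle: if an ai-semiring $A$ embeds into an ai-semiring $B$, then $A$ is isomorphic to a subalgebra of $B$, hence $A\in\mathsf{V}(B)$. Since $\mathsf{V}(A)$ is the smallest variety containing $A$, this gives $\mathsf{V}(A)\leq\mathsf{V}(B)$. Equivalently, every identity satisfied by $B$ is satisfied by $A$, because evaluations into $A$ become evaluations into $B$ after composing with the embedding.

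For the links with $n\geq 2$, I apply this principle to the monomorphism $\mathbf{M}_n(S)\to\mathbf{M}_{n+1}(S)$ supplied by Theorem~\ref{thm1}. That gives $\mathsf{V}(\mathbf{M}_n(S))\leq\mathsf{V}(\mathbf{M}_{n+1}(S))$ for every $n\geq 2$.

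For the first link I need an embedding of $S$ into $\mathbf{M}_2(S)$; this is the observation attributed to Jiao and Ren~\cite{jr2025} in the introduction. To keep the argument self-contained, I would take $s\mapsto$ the $2\times 2$ matrix all of whose entries equal $s$. This map is clearly injective and additive. For multiplication, each entry of the product of the constant matrices for $s$ and $t$ is $st+st=st$, by additive idempotence, so the map is multiplicative. This is the same mechanism as in the proof of Theorem~\ref{thm1}, and it is the only point requiring care: one should not rely on an embedding like $s\mapsto\mathrm{diag}(s,s)$ or $s\mapsto\mathrm{diag}(s,0)$, since $S$ need not have a zero. With $\mathsf{V}(S)\leq\mathsf{V}(\mathbf{M}_2(S))$ in hand, the chain follows by induction on $n$.
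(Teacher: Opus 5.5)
Your proof is correct and matches the paper's argument, which derives the chain from the Embedding Theorem (Theorem~\ref{thm1}) together with the embedding $S\hookrightarrow\mathbf{M}_2(S)$ that the introduction attributes to Jiao and Ren. Your explicit constant-matrix embedding $s\mapsto[s]_2$, whose multiplicativity rests on $st+st=st$, correctly supplies that first link; Theorem~\ref{thm1} alone does not cover it, since it is stated only for $n\geq 2$.
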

\begin{proof}
This follows directly from Theorem~\ref{thm1}.
\end{proof}

\cite[Propositions 3.2 and 3.3 ]{jr2025} illustrate that the ascending chain \eqref{2026052401}
may already stabilize at $\mathsf{V}(S)$ or $\mathsf{V}(\mathbf{M}_2(S))$.
In contrast, the following result shows that the chain \eqref{2026052401} can be strictly increasing from the very beginning.

\begin{pro}\label{prop:chain}
The following varieties form an infinite strictly ascending chain:
\[
\mathsf{V}(D_2) < \mathsf{V}(\mathbf{M}_2(D_2)) < \cdots < \mathsf{V}(\mathbf{M}_n(D_2)) < \mathsf{V}(\mathbf{M}_{n+1}(D_2)) < \cdots .
\]
\end{pro}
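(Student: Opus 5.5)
The inclusions $\mathsf{V}(D_2) \leq \mathsf{V}(\mathbf{M}_2(D_2)) \leq \cdots$ are already given by Corollary~\ref{coro26052420}, so only strictness needs proof. For each $n \geq 1$ I will exhibit an identity that holds in $\mathbf{M}_n(D_2)$ but fails in $\mathbf{M}_{n+1}(D_2)$, writing $\mathbf{M}_1(D_2)=D_2$ so that the first step is the case $n=1$. The candidate identity is
\[
x^{n+1} + x + x^2 + \cdots + x^n \approx x + x^2 + \cdots + x^n ,
\]
which says $x^{n+1} \leq x + x^2 + \cdots + x^n$ in the natural order.

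To verify that it holds in $\mathbf{M}_n(D_2)$, I identify an $n\times n$ Boolean matrix $A$ with a digraph on $\{1,\dots,n\}$. Then $(A^m)_{ij}=1$ exactly when there is a walk of length $m$ from $i$ to $j$.

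Suppose $(A^{n+1})_{ij}=1$. The corresponding walk visits $n+2$ vertices (with multiplicity) among only $n$ vertices, so some vertex repeats. Cutting out the closed subwalk between two occurrences of that vertex removes some number $c$ of edges with $1 \leq c \leq n$, since a closed walk between consecutive repeats can be chosen of length at most $n$. This leaves a walk from $i$ to $j$ of length $n+1-c$, which lies in $\{1,\dots,n\}$. Hence $(A^{n+1-c})_{ij}=1$, and so $A^{n+1} \leq A + \cdots + A^n$.

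The one point to check carefully is that the shortened walk never has length $0$. This is exactly why the exponent is $n+1$ rather than $n$: no identity matrix term is available. It follows because a closed subwalk between two occurrences of the same vertex has length at most $n$ as long as we take the first repetition. For $n=1$ the argument reduces to $x^2 \leq x$, which holds in $D_2$ since $D_2$ is multiplicatively idempotent.

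To see that the identity fails in $\mathbf{M}_{n+1}(D_2)$, take $P$ to be the permutation matrix of the cyclic permutation $1\to 2\to\cdots\to n+1\to 1$. Then $P^{n+1}=I$ has all diagonal entries $1$. Each $P^k$ with $1\leq k\leq n$ is the permutation matrix of a fixed-point-free permutation, so every diagonal entry of $P+\cdots+P^n$ is $0$. Hence $P^{n+1}\not\leq P+\cdots+P^n$, the identity fails, and $\mathsf{V}(\mathbf{M}_n(D_2)) \neq \mathsf{V}(\mathbf{M}_{n+1}(D_2))$.

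The main obstacle is the walk-shortening argument in the verification step, specifically ensuring the removed cycle length $c$ satisfies $1\le c\le n$. I expect it to go through by choosing the first repeated vertex along the walk.
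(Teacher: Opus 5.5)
Your proof is correct, and it takes a genuinely different route from the paper's. The paper separates $\mathsf{V}(D_2)$ from $\mathsf{V}(\mathbf{M}_2(D_2))$ with $x\approx x^2$. For $n\geq 2$ it uses the purely multiplicative identity $x^{(n-1)^2+1}\approx x^{(n-1)^2+1+[n]}$, imported from the literature on indices and periods of Boolean matrices. Its witness in $\mathbf{M}_{n+1}(D_2)$ is a Wielandt-type primitive matrix of exponent $n^2+1$, whose properties are also cited, and it needs a separate check for $n=2$.

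You instead use the additive identity $x^{n+1}+x+\cdots+x^n\approx x+\cdots+x^n$, proved by an elementary pigeonhole argument on walks, with a cyclic permutation matrix as the witness. This is self-contained and uniform in $n$, including the first step $n=1$. Your walk-shortening step works exactly as you expect: if $q$ is the first index with $v_q\in\{v_0,\dots,v_{q-1}\}$, then $v_0,\dots,v_{q-1}$ are distinct, so $q\leq n$ and the removed closed walk has length $1\leq c\leq q\leq n$.

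What the paper's route buys is that its separating identities contain no addition, so it shows that the multiplicative reducts alone already give a strictly increasing chain. Your witness could not do that. When $n+1$ is not a prime power, $n+1$ divides $[n]$, so a cyclic permutation matrix of order $n+1$ satisfies every $x^p\approx x^{p+[n]}$. That is why the paper needs the index information carried by the Wielandt matrix, not just a period.
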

\begin{proof}
By Corollary~\ref{coro26052420},
it suffices to show that $\mathsf{V}(D_2)$ is properly contained in $\mathsf{V}(\mathbf{M}_2(D_2))$,
and that for each $n \geq 2$ there exists an identity that holds in $\mathsf{V}(\mathbf{M}_n(D_2))$ but fails in $\mathsf{V}(\mathbf{M}_{n+1}(D_2))$.

It is easy to see that the identity $x \approx x^2$ holds in $D_2$ but fails in $\mathbf{M}_2(D_2)$; hence $\mathsf{V}(D_2) < \mathsf{V}(\mathbf{M}_2(D_2))$.

From \cite[Formula (37)]{sg1996} (where it is called the Euler-Fermat formula) or from the results in \cite[Lemma~2.1, Theorem~3.2, Corollary~4.1]{hl2004}, one can deduce that $\mathbf{M}_n(D_2)$ satisfies the identity
\begin{equation}\label{eq26052501}
x^{(n-1)^2+1} \approx x^{(n-1)^2+1+[n]},
\end{equation}
where $[n]$ denotes the least common multiple of $1, 2, \dots, n$.

It remains to show that this identity fails in $\mathbf{M}_{n+1}(D_2)$.
To this end, we need to find a matrix $A$ in $\mathbf{M}_{n+1}(D_2)$ such that $A^{(n-1)^2+1} \neq A^{(n-1)^2+1+[n]}$.
Let $A_{n+1}$ be the matrix in $\mathbf{M}_{n+1}(D_2)$ defined by
\[
A_{n+1} = \begin{bmatrix}
0 & 1 & 0 & \cdots & 0 \\
0 & 0 & 1 & \cdots & 0 \\
\vdots & \vdots & \vdots & \ddots & \vdots \\
1 & 0 & 0 & \cdots & 1 \\
1 & 0 & 0 & \cdots & 0
\end{bmatrix},
\]
where the entries are $1$ exactly on the superdiagonal ($(i, i+1)$ for $1 \leq i \leq n$), at $(n, 1)$, and at $(n+1, 1)$;
all other entries are $0$.
If $n=2$, then it is easy to check that $A_{3}^{2}\neq A_{3}^{4}$.
Now suppose that $n\geq 3$.
By \cite[Example 3.4]{Dd1999},
one can obtain that $A_{n+1}^k=[1]_{n+1}$ for all $k\geq n^2+1$, but $A_{n+1}^k\neq [1]_{n+1}$ for all $1\leq k\leq n^2$,
where $[1]_{n+1}$ denotes the constant $(n+1)\times (n+1)$ matrix with all entries equal to $1$.
Since $(n-1)^2+1< n^2$, it follows that $A_{n+1}^{(n-1)^2+1}\neq [1]_{n+1}$.
Since $2n-1\leq n(n-1)\leq [n]$, we obtain that $(n-1)^2+1+[n]\geq n^2+1$,
and so $A_{n+1}^{(n-1)^2+1+[n]}=[1]_{n+1}$.
Consequently, $A_{n+1}^{(n-1)^2+1} \neq A_{n+1}^{(n-1)^2+1+[n]}$.
Therefore, $\mathbf{M}_{n+1}(D_2)$ does not satisfy the identity \eqref{eq26052501}.
\end{proof}

\begin{remark}
Let $D$ be a nontrivial distributive lattice, and let $n\geq 2$ be an integer.
Then
\[
\mathsf{V}(\mathbf{M}_n(D))=\mathsf{V}(\mathbf{M}_n(D_2)).
\]
This follows from Theorem~\ref{thm27},
and $\mathsf{V}(D)=\mathsf{V}(D_2)$,
which relies on the fact that $\mathsf{V}(D_2)$ is the variety of all distributive lattices
and is a minimal nontrivial ai-semiring variety (see \cite{sr}).
\end{remark}

\section{Nonfinite basis property of the matrix semiring $\mathbf{M}_n(S_7)$}
In this section, we apply \cite[Theorem 2.2]{gjr25},
a criterion due to Gao et al. that is based on the work of Jackson et al.~\cite{jrz},
to show that the matrix semiring $\mathbf{M}_n(S_7)$ is nonfinitely based for all $n \geq 2$.
For completeness and the reader's convenience, we first provide some basic facts concerning hypergraphs.

A \emph{$3$-uniform hypergraph} $\mathbb{H}$ is a pair $(V, E)$,
where $E$ is a family of $3$-element subsets of a set $V$.
Each element of $V$ is a \emph{vertex} of $\mathbb{H}$, and each element of $E$ is a \emph{hyperedge} of $\mathbb{H}$.
Let $\mathbb{H}=(V, E)$ be a $3$-uniform hypergraph.
Then $\mathbb{H}$ is \emph{$2$-colourable} if there exists a mapping $\varphi: V \to \{0, 1\}$ such that for every hyperedge $e \in E$,
the image $\varphi(e)$ contains exactly two distinct values; that is, $|\varphi(e)| = 2$.

A \emph{cycle} of $\mathbb{H}$ is an alternating sequence $v_1, e_1, v_2, e_2, \ldots, v_n, e_n$ of
distinct vertices and hyperedges such that $v_1 \in e_1 \cap e_n$ and $v_{i+1} \in e_i \cap e_{i+1}$ for $1 \leq i < n$.
The length of this cycle is $n$. The \emph{girth} of $\mathbb{H}$ is the length of its shortest cycle.
Let $n \geq 2$ be an integer.
It follows from \cite[Theorem 2.7]{ham18}
that there exists a $3$-uniform hypergraph $\mathbb{H}_n= (V_n, E_n)$ that is not $2$-colourable and has girth greater than $3\binom{3n}{2}$.
Moreover, $\mathbb{H}_n$ has no isolated vertex; that is, every vertex of $\mathbb{H}_n$ lies in some hyperedge.

Now let $\{x_v \mid v \in V_n\}$ be a set of variables in one-to-one correspondence with $V_n$.
We shall use $\bt_{\mathbb{H}_n}$ to denote the ai-semiring term
\[
\sum_{\{u, v, w\} \in E_n} x_{u} x_{v} x_{w}.
\]
Note that each hyperedge of $\mathbb{H}_n$ gives rise to 3!=6 distinct words in $\bt_{\mathbb{H}_n}$.
This fact is crucial and will be used repeatedly in the proof of Theorem~\ref{thm26052701}.

The following result is due to Gao et al.~\cite[Theorem 2.2]{gjr25}.

\begin{lem}\label{lem26052501}
Let $\mathcal{V}$ be a variety of ai-semirings that contains the flat semiring $S_c(abc)$.
If for every $n \geq 2$ there exists a vertex $v_n \in V_n$ such that $\mathcal{V}$ satisfies the identity
\begin{equation}\label{id26052550}
\mathbf{t}_{\mathbb{H}_n} \approx \mathbf{t}_{\mathbb{H}_n} + x_{v_n},
\end{equation}
then $\mathcal{V}$ is nonfinitely based.
\end{lem}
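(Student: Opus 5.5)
The plan is to argue by contradiction, using derivations in equational logic in the style of Jackson et al.~\cite{jrz}.

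\textbf{Setup.} Suppose $\mathcal{V}$ has a finite basis $\Sigma$. Let $N$ be an upper bound for the number of variables occurring in any identity of $\Sigma$, and fix $n$ with $3n>N$ (any sufficiently large $n$ relative to $N$ will do). By hypothesis $\mathcal{V}$ satisfies $\bt_{\mathbb{H}_n}\approx \bt_{\mathbb{H}_n}+x_{v_n}$, so there is a derivation
\[
\bt_{\mathbb{H}_n}=\bw_0,\ \bw_1,\ \ldots,\ \bw_k=\bt_{\mathbb{H}_n}+x_{v_n}.
\]
Each step replaces a subterm $\bt\,\sigma(\mathbf{p})\,\bt'$ by $\bt\,\sigma(\mathbf{q})\,\bt'$, where $\mathbf{p}\approx\mathbf{q}$ (or its reverse) lies in $\Sigma$ and $\sigma$ is a substitution. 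Since ai-semiring terms can be put in sum-of-words form, one may also assume a summand-wise form of these steps.

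\textbf{Invariant.} I aim for a property $P$ of terms over $\{x_v\}$ that
\begin{enumerate}[label=(\roman*)]
\item holds for $\bt_{\mathbb{H}_n}$,
\item fails for $\bt_{\mathbb{H}_n}+x_{v_n}$, and
\item is preserved by every single step using an identity of $\mathcal{V}$ in at most $N$ variables.
\end{enumerate}
Roughly, $P$ says that every word of the term is ``generated from hyperedges of $\mathbb{H}_n$'': each variable occurrence sits inside a factor lying over a hyperedge, so the bare letter $x_{v_n}$ is excluded.

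\textbf{Preservation of $P$.} This is the heart of the argument. Consider one step using $\mathbf{p}\approx\mathbf{q}$ under a substitution $\sigma$. Only at most $N$ substitution images are involved, and each is a term whose words are built from few hyperedges. So the part of $\mathbb{H}_n$ ``seen'' by the step is a subhypergraph spanned by at most about $\binom{3n}{2}$ hyperedges in the relevant configurations. Because the girth of $\mathbb{H}_n$ exceeds $3\binom{3n}{2}$, this subhypergraph contains no cycle. It is therefore a linear hyperforest, and it admits an assignment of the letters $a,b,c$ to its vertices such that every hyperedge receives all three letters.

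Composing this assignment with evaluation in $S_c(abc)\in\mathcal{V}$, every hyperedge word evaluates to $abc\neq 0$. Any word that is not a product over a hyperedge — for example a stray variable, or a word with a repeated or ``broken'' hyperedge — evaluates either to $0$ or to a proper subword of $abc$. Since $\mathcal{V}$ satisfies $\mathbf{p}\approx\mathbf{q}$, so does $S_c(abc)$. Comparing the values of $\sigma(\mathbf{p})$ and $\sigma(\mathbf{q})$ under such evaluations, and using flatness (a sum of distinct nonzero elements is $0$), shows that the new words introduced by the step must again satisfy $P$. Otherwise one could choose the local assignment to separate the two sides.

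\textbf{Conclusion and main obstacle.} By induction every $\bw_i$ satisfies $P$, contradicting the fact that $\bw_k$ contains the word $x_{v_n}$. Hence $\mathcal{V}$ is nonfinitely based.

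Note that the non-2-colourability of $\mathbb{H}_n$ is not used to block the derivation; it is what makes the hypothesis \eqref{id26052550} satisfiable at all in the intended applications. The argument relies only on the fact that every small portion of $\mathbb{H}_n$ is acyclic. I expect the main obstacle to be formulating $P$ precisely enough that the local acyclic-colouring argument really controls arbitrary substitutions. In particular, substitution images may be sums of words spanning several hyperedges, so the bound relating $N$ to the number of hyperedges involved, and hence to the girth $3\binom{3n}{2}$, must be checked carefully.
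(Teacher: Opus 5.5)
The paper does not prove this lemma; it quotes it from Gao et al.\ (Theorem~2.2 there), who build on the hypergraph method of Jackson et al. Your overall strategy is the right kind of argument: a derivation invariant, checked locally by evaluating in $S_c(abc)$ under rainbow $\{a,b,c\}$-colourings of acyclic pieces of $\mathbb{H}_n$. You are also right that non-$2$-colourability plays no role in blocking the derivation. However, there is a genuine gap: the step you call the heart of the argument does not go through as written.

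\textbf{The invariant.} $P$ should simply be: every word of the term is $x_ux_vx_w$ for some ordering of a hyperedge $\{u,v,w\}\in E_n$, i.e.\ the term is a subterm of $\bt_{\mathbb{H}_n}$.

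\textbf{The false bound.} Your claim that a step ``sees'' only boundedly many hyperedges is false. In a step $\mathbf{r}+\mathbf{c}\,\sigma(\mathbf{p})\,\mathbf{c}'\to\mathbf{r}+\mathbf{c}\,\sigma(\mathbf{q})\,\mathbf{c}'$ the images $\sigma(y)$ are arbitrary terms. If $y$ is a summand of $\mathbf{p}$ and $\mathbf{c},\mathbf{c}'$ are empty, then $\sigma(y)$ can be any subterm of $\bt_{\mathbb{H}_n}$, even all of it. The hyperedges occurring in $\mathbf{c}\sigma(\mathbf{p})\mathbf{c}'$ then need not be acyclic; they may be all of $E_n$, which has no rainbow colouring. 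In that case every evaluation in $S_c(abc)$ sends the left side to $0$, and you learn nothing. You flag this at the end, but it is precisely what must be proved, and the proposal leaves it open.

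\textbf{The separation claim.} Your sentence that a non-hyperedge word evaluates to $0$ or to a proper subword of $abc$ is false for any fixed colouring. A rainbow-coloured triple $\{u,v,w\}\notin E_n$ evaluates to $abc$.

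\textbf{How to repair the bound.}
\begin{enumerate}
\item Since all words of $\mathbf{c}\sigma(\mathbf{p})\mathbf{c}'$ have length $3$, every word of $\mathbf{p}$ has length at most $3$. So $\mathbf{p}$ has at most $M=N+N^2+N^3$ words.
\item Pick a single word from each $\sigma(y)$. This yields at most $M$ hyperedges, hence a hyperforest once the girth exceeds $M$. Take a rainbow colouring of it and send all other variables to $0$. This gives a substitution into $S_c(abc)$ making $\mathbf{p}$ nonzero, so every word of $\mathbf{q}$ has length at most $3$ and content inside $c(\mathbf{p})$.
\item Now fix a word $\mathbf{s}$ of $\mathbf{c}\sigma(\mathbf{q})\mathbf{c}'$. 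Shrink $\sigma$ to $\sigma'$, where $\sigma'(y)\subseteq\sigma(y)$ contains only the at most three words used to build $\mathbf{s}$, and one word for each remaining variable. Then $\mathbf{c}\sigma'(\mathbf{p})\mathbf{c}'\subseteq\mathbf{c}\sigma(\mathbf{p})\mathbf{c}'$ involves at most $27M$ hyperedges.
\item These form a hyperforest $F'$ if the girth exceeds $27M$. So you should take $n$ large in terms of $N$, not merely $3n>N$; the hypothesis allows this.
\item Flatness and $S_c(abc)\models\mathbf{p}\approx\mathbf{q}$ then give $\theta(\mathbf{s})=abc$ for \emph{every} rainbow colouring $\theta$ of $F'$, extended by $0$ elsewhere.
\end{enumerate}

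\textbf{How to repair the separation.} The argument must use that quantifier over all colourings. From $\theta(\mathbf{s})=abc$, the word $\mathbf{s}=x_ux_vx_w$ is linear with letters in $V(F')$. In a hyperforest, two vertices not lying in a common hyperedge receive equal colours under some rainbow colouring. Hence each pair from $\{u,v,w\}$ lies in a common hyperedge of $F'$. The absence of cycles of length $2$ and $3$ then forces $\{u,v,w\}\in F'$.

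With these two ingredients the invariant is preserved, and your conclusion follows.
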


We shall apply Lemma~\ref{lem26052501} to prove that $\mathbf{M}_n(S_7)$ is nonfinitely based.
As a first step, we establish some basic properties of $\mathbf{M}_n(S_7)$.
Some of these will be used repeatedly in the sequel without explicit reference.


\begin{pro}
Let $n \geq 2$ be an integer, and let $[\infty]_n$, $[a]_n$,
and $[1]_n$ denote the constant $n \times n$ matrices in $\mathbf{M}_n(S_7)$
with all entries equal to $\infty$, $a$, and $1$, respectively.
\begin{enumerate}[label=(\arabic*), font=\normalfont]
\item $[\infty]_n$ is the additive maximum element and the multiplicative zero element of $\mathbf{M}_n(S_7)$;
\item $[a]_n$ and $[1]_n$ are both additive minimal elements of $\mathbf{M}_n(S_7)$;
\item If $AB = [1]_n$ for some $A, B \in \mathbf{M}_n(S_7)$, then $A = B = [1]_n$.
\end{enumerate}
\end{pro}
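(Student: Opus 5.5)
All three parts reduce to entrywise facts about $S_7$. Addition in $\mathbf{M}_n(S_7)$ is entrywise, and the order on $S_7$ is given by $1 < \infty$ and $a < \infty$, with $1$ and $a$ incomparable. Two facts about multiplication in $S_7$ will be used: $\infty$ is a multiplicative zero, and the only way to get $xy = 1$ is $x = y = 1$, since $1\cdot a = a$, $a\cdot a = \infty$, and anything times $\infty$ is $\infty$.

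For (1), the additive part is immediate: $x + \infty = \infty$ for every $x \in S_7$, so $A + [\infty]_n = [\infty]_n$ for every $A$. For the multiplicative part, each summand $a_{ik}\infty$ of the $(i,j)$-entry of $A[\infty]_n$ equals $\infty$, so the entry is $\infty$. The product $[\infty]_n A$ is handled symmetrically.

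For (2), suppose $B \leq [a]_n$, that is, $B + [a]_n = [a]_n$. Entrywise this gives $b_{ij} + a = a$, which forces $b_{ij} = a$, because $1 + a = \infty$ and $\infty + a = \infty$. Hence $B = [a]_n$. The same argument with $1$ in place of $a$ shows $[1]_n$ is minimal. So both matrices are minimal in the order $\leq$ of $\mathbf{M}_n(S_7)$.

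For (3), the key observation is that in $S_7$ a sum $\sum_k s_k$ with $n \geq 1$ summands equals $1$ if and only if every $s_k = 1$. Indeed, if two summands are distinct the sum is $\infty$; if all summands are equal the sum is that common value. Now suppose $AB = [1]_n$. For each pair $(i,j)$ we have $\sum_k a_{ik}b_{kj} = 1$, so $a_{ik}b_{kj} = 1$ for all $k$. By the fact about products above, this gives $a_{ik} = 1$ and $b_{kj} = 1$ for all $k$. Letting $i$, $j$, $k$ range over $\{1,\dots,n\}$ yields $A = B = [1]_n$.

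None of these steps is really an obstacle. The only point needing care is the characterization of when a sum in $S_7$ equals $1$, which uses idempotence together with the fact that the sum of any two distinct elements is $\infty$.
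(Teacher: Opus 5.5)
Your proof is correct and takes the same approach the paper implicitly relies on. The paper gives no separate proof of this proposition and treats it as immediate from the Cayley tables of $S_7$; the two entrywise facts you isolate (a sum in $S_7$ equals $1$ only if every summand is $1$, and a product equals $1$ only if both factors are $1$) are exactly the ones it invokes in the proof of Proposition~\ref{pro3.33}, and your checks of (1) and (2) spell out details the paper leaves to the reader.
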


\begin{pro}\label{pro3.33}
Let $n \geq 2$ be an integer, let $A$ and $B$ be matrices in $\mathbf{M}_n(S_7)$, and let $1 \leq i, j \leq n$.
\begin{enumerate}[label=(\arabic*), font=\normalfont]
\item $(AB)_{ij} = 1$ if and only if $A_{ik} = B_{kj} = 1$ for all $1 \leq k \leq n$;
that is, the $i$-th row of $A$ consists entirely of $1$'s, and the $j$-th column of $B$ consists entirely of $1$'s.

\item $(AB)_{ij} = a$ if and only if $\{A_{ik}, B_{kj}\} = \{a, 1\}$ for all $1 \leq k \leq n$.
\end{enumerate}
\end{pro}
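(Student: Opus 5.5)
The plan is to compute directly from the definition $(AB)_{ij}=\sum_{k=1}^n A_{ik}B_{kj}$, using only the Cayley tables of $S_7$ in Table~\ref{tbs7}. First record two facts about $S_7$. In $(S_7,+)$, the sum of a nonempty family of elements equals $1$ iff every summand is $1$, equals $a$ iff every summand is $a$, and is $\infty$ otherwise. This holds because $1$ and $a$ are incomparable atoms with $1+a=\infty$, and $\infty$ is absorbing. In $(S_7,\cdot)$, a product $xy$ equals $1$ iff $x=y=1$, equals $a$ iff $\{x,y\}=\{a,1\}$, and is $\infty$ in every other case, namely when some factor is $\infty$ or $x=y=a$. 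Both facts can be read straight off the tables.

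For part (1), apply the additive fact to the summands $A_{ik}B_{kj}$, $1\le k\le n$. Then $(AB)_{ij}=1$ iff $A_{ik}B_{kj}=1$ for all $k$. By the multiplicative fact, this holds iff $A_{ik}=B_{kj}=1$ for all $k$, which says exactly that the $i$-th row of $A$ and the $j$-th column of $B$ consist entirely of $1$'s.

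For part (2), $(AB)_{ij}=a$ iff $A_{ik}B_{kj}=a$ for all $k$, by the additive fact. By the multiplicative fact, this holds iff $\{A_{ik},B_{kj}\}=\{a,1\}$ for every $k$. The converse directions are immediate in both parts, since the summands are then all $1$ (respectively all $a$), and idempotence of $+$ gives the stated sum.

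There is no real obstacle here. The only point needing care is the additive characterization: we must note that a sum of several copies of $a$ is $a$ by idempotence, and that mixing $1$ and $a$ already produces $\infty$. This is what forces the condition to hold for \emph{all} $k$ rather than for some $k$.
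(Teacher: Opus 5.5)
Your proposal is correct and is essentially the paper's own argument. Both read off the Cayley tables of $S_7$ that a sum equals $1$ (resp.\ $a$) exactly when every summand does, and that a product equals $1$ iff both factors are $1$ and equals $a$ iff the factors are $1$ and $a$, then apply this to $(AB)_{ij}=\sum_{k=1}^n A_{ik}B_{kj}$; one wording fix is that $1$ and $a$ are incomparable minimal elements rather than atoms, since $S_7$ has no least element, though this does not affect the argument.
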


\begin{proof}
The result follows directly from the Cayley tables of $S_7$ (see Table~\ref{tbs7}) together with the definition of matrix multiplication.
In $S_7$, a sum equals $1$ only when every summand is $1$, and a product equals $1$ if and only if both factors are $1$.
Similarly, a sum equals $a$ only when every summand is $a$, and a product equals $a$ if and only if one factor is $1$ and the other is $a$.
\end{proof}

\begin{cor}\label{lem3.7}
Let $n\geq 2$ be an integer, and let $A, B, C$ be matrices in $\mathbf{M}_n(S_7)$ such that $AB=C$.
If every row or every column of $C$ contains at least one entry equal to $1$, then $A =[1]_n$ or $B =[1]_n$.
In particular, if $C$ has a row consisting entirely of $1$'s, then $B =[1]_n$;
if $C$ has a column consisting entirely of $1$'s, then $A =[1]_n$.
\end{cor}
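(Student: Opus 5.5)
We derive the corollary directly from Proposition~\ref{pro3.33}(1). The key observation is that a single entry $(AB)_{ij}=1$ already forces the whole $i$-th row of $A$ and the whole $j$-th column of $B$ to consist of $1$'s. So the plan is to collect these forced rows and columns and show that they exhaust all of $A$ or all of $B$.

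Suppose first that every row of $C$ contains an entry equal to $1$. For each $i\in\{1,\dots,n\}$ choose $j_i$ with $C_{ij_i}=1$. Since $C=AB$, Proposition~\ref{pro3.33}(1) gives $A_{ik}=1$ for all $k$. Doing this for every $i$ shows that every row of $A$ is all $1$'s, so $A=[1]_n$.

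Symmetrically, suppose every column of $C$ contains an entry equal to $1$. For each $j$ pick $i_j$ with $C_{i_jj}=1$; Proposition~\ref{pro3.33}(1) then yields $B_{kj}=1$ for all $k$. Hence every column of $B$ is all $1$'s, so $B=[1]_n$. This proves the first assertion.

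For the ``in particular'' statements, suppose the $i$-th row of $C$ is entirely $1$'s. Applying Proposition~\ref{pro3.33}(1) to each entry $C_{ij}=1$, for $j=1,\dots,n$, shows that the $j$-th column of $B$ is all $1$'s. Thus $B=[1]_n$. Dually, if the $j$-th column of $C$ is all $1$'s, then each entry $C_{ij}=1$ forces the $i$-th row of $A$ to be all $1$'s, so $A=[1]_n$.

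There is no real obstacle here. The only point needing care is matching rows of $C$ to rows of $A$ and columns of $C$ to columns of $B$: full rows of $C$ give information about $B$, and $1$-entries in every row give information about $A$. Everything else is an immediate bookkeeping consequence of Proposition~\ref{pro3.33}(1).
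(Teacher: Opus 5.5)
Your proof is correct and takes the same route as the paper, which states the corollary as a direct consequence of Proposition~\ref{pro3.33}(1). You have simply written out the row/column bookkeeping that the paper leaves implicit: a $1$ in each row of $C$ forces $A=[1]_n$, and a $1$ in each column of $C$ forces $B=[1]_n$.
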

\begin{proof}
This is a direct consequence of Proposition~\ref{pro3.33}.
\end{proof}

\begin{pro}\label{lem1}
Let $n \geq 2$ be an integer, and let $A$, $B$ and $C$ be matrices in $\mathbf{M}_n(S_7)$.
If $(ABC)_{ij} = 1$ for some $1 \leq i, j \leq n$, then $B = [1]_n$.
\end{pro}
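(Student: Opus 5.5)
We argue by applying Proposition~\ref{pro3.33}(1) twice, once to each grouping of the product $ABC$.

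First write $ABC=A(BC)$. Since $(A(BC))_{ij}=1$, Proposition~\ref{pro3.33}(1) applied to $A$ and $BC$ shows that the $i$-th row of $A$ consists entirely of $1$'s. It also shows that the $j$-th column of $BC$ consists entirely of $1$'s, so $(BC)_{kj}=1$ for every $1\le k\le n$. Applying Proposition~\ref{pro3.33}(1) to $(BC)_{kj}=1$ for a fixed $k$ gives that the $j$-th column of $C$ is all $1$'s. Letting $k$ vary, it also gives that the $k$-th row of $B$ is all $1$'s for every $k$. Hence every entry of $B$ equals $1$, that is, $B=[1]_n$.

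Equivalently, in the language of Corollary~\ref{lem3.7}: the matrix $BC$ has a column consisting entirely of $1$'s, so the corollary, applied to the factorisation $B\cdot C$, forces $B=[1]_n$ directly.

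There is essentially no obstacle here. The only point needing care is the choice of grouping. We must use $A(BC)$, so that the all-ones condition falls on a column of $BC$, and then invoke the column clause of Corollary~\ref{lem3.7}. The symmetric grouping $(AB)C$ would give a row of $AB$ all $1$'s, which yields $B=[1]_n$ through the row clause instead; either route works.
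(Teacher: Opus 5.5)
Your argument is correct and is essentially the paper's: the paper expands $(ABC)_{ij}=\sum_{k=1}^{n}\sum_{l=1}^{n}A_{ik}B_{kl}C_{lj}$ in one step and observes that every summand, hence every $B_{kl}$, must equal $1$, whereas you carry out the same computation in two stages by grouping as $A(BC)$ and invoking Proposition~\ref{pro3.33}(1) twice. Both rest on the same fact that in $S_7$ a sum (resp.\ product) equals $1$ only when all summands (resp.\ factors) do, so the difference is purely organizational.
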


\begin{proof}
Suppose $(ABC)_{ij} = 1$ for some $1 \leq i, j \leq n$. Then
\[
(ABC)_{ij} = \sum_{k=1}^{n} \sum_{l=1}^{n} A_{ik} B_{kl} C_{lj} = 1.
\]
Since the sum equals $1$, every summand must be $1$; in particular, $B_{kl} = 1$ for all $1 \leq k, l \leq n$. Hence $B = [1]_n$.
\end{proof}

\begin{pro}\label{lem2}
Let $n \geq 2$ be an integer, and let $A \in \mathbf{M}_n(S_7)$.
If $([1]_n A [1]_n)_{ij} = a$ for some $1 \leq i, j \leq n$, then $A = [a]_n$.
\end{pro}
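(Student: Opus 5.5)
The plan is to expand the $(i,j)$ entry of the triple product directly, in the same way as in the proof of Proposition~\ref{lem1}. Since every entry of $[1]_n$ equals $1$ and $1$ is the multiplicative identity of $S_7$, we have
\[
([1]_n A [1]_n)_{ij} = \sum_{k=1}^{n}\sum_{l=1}^{n} 1\cdot A_{kl}\cdot 1 = \sum_{k=1}^{n}\sum_{l=1}^{n} A_{kl}.
\]
So the entry does not depend on $i$ and $j$; it is simply the sum of all entries of $A$.

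Next I use the addition table of $S_7$ (Table~\ref{tbs7}). A nonempty sum equals $a$ only when every summand equals $a$: any summand equal to $\infty$ makes the sum $\infty$, and mixing $1$ with $a$ also gives $\infty$. This is the same fact already used in the proof of Proposition~\ref{pro3.33}(2). Hence, if the displayed sum equals $a$, then $A_{kl}=a$ for all $1\leq k,l\leq n$, that is, $A=[a]_n$.

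No step is a real obstacle. The only point to check is that multiplying by $1$ on both sides leaves each $A_{kl}$ unchanged, which is immediate from the multiplication table.
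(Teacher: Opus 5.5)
Your proof is correct and is the same argument as the paper's. You expand the $(i,j)$ entry of $[1]_n A [1]_n$ as the sum of all entries of $A$, then use the fact that in $S_7$ a sum equals $a$ only when every summand is $a$; the paper obtains that fact from $a$ being additively minimal, while you read it off the addition table.
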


\begin{proof}
Since $([1]_n A [1]_n)_{ij} = a$, we have
\[
a = \sum_{k=1}^{n} \sum_{l=1}^{n} ([1]_n)_{ik} A_{kl} ([1]_n)_{lj}
   = \sum_{k=1}^{n} \sum_{l=1}^{n} 1 \cdot A_{kl} \cdot 1
   = \sum_{k=1}^{n} \sum_{l=1}^{n} A_{kl}.
\]
Since $a$ is an additive minimal element of $S_7$, the sum equals $a$ only if every summand is $a$. Thus $A_{kl} = a$ for all $1 \leq k, l \leq n$; that is, $A = [a]_n$.
\end{proof}

Let $\mathcal{V}_1$ and $\mathcal{V}_2$ be varieties of ai-semirings such that $\mathcal{V}_1\leq \mathcal{V}_2$.
The interval $[\mathcal{V}_1, \mathcal{V}_2]$ denotes the class of all subvarieties of $\mathcal{V}_2$ that contain $\mathcal{V}_1$.
From \cite[Figure 1]{gjr25} we know that $\mathsf{V}(S_7)$ contains $S_c(abc)$,
and since $\mathsf{V}(\mathbf{M}_n(S_7))$ contains $\mathsf{V}(S_7)$,
the same holds for $\mathsf{V}(\mathbf{M}_n(S_7))$.
Hence the notation $[\mathsf{V}(S_c(abc)), \mathsf{V}(\mathbf{M}_n(S_7))]$ is well-defined.

\begin{thm}\label{thm26052701}
Let $n\geq 2$ be an integer.
Every variety in $[\mathsf{V}(S_c(abc)), \mathsf{V}(\mathbf{M}_n(S_7))]$ is nonfinitely based.
\end{thm}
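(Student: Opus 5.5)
By the choice of $\mathbb{H}_n$ and Lemma~\ref{lem26052501}, it suffices to show that $\mathbf{M}_n(S_7)$ satisfies the identity \eqref{id26052550} for every $m\geq 2$ and every vertex $v\in V_m$ (the index of the hypergraph is independent of the matrix size $n$). Every variety $\mathcal{V}$ in the interval $[\mathsf{V}(S_c(abc)),\mathsf{V}(\mathbf{M}_n(S_7))]$ contains $S_c(abc)$ and satisfies all identities of $\mathbf{M}_n(S_7)$. So Lemma~\ref{lem26052501} applies to each such $\mathcal{V}$ and yields the theorem.

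Fix an assignment $\varphi\colon x_u\mapsto U_u\in\mathbf{M}_n(S_7)$ and write $T=\varphi(\bt_{\mathbb{H}_m})$. Since $S_7$ is flat with top element $\infty$, the equation $T+U_v=T$ holds entrywise exactly when, for each $(i,j)$, either $T_{ij}=\infty$ or $T_{ij}=(U_v)_{ij}$. So fix $(i,j)$ with $T_{ij}\in\{1,a\}$. In $S_7$ a sum equals $1$ (resp.\ $a$) only if every summand equals $1$ (resp.\ $a$). Hence every word $x_ux_{u'}x_{u''}$ of $\bt_{\mathbb{H}_m}$ satisfies $(U_uU_{u'}U_{u''})_{ij}=T_{ij}$.

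\emph{Case $T_{ij}=1$.} By Proposition~\ref{lem1}, the middle factor of every word is $[1]_n$. Each hyperedge contributes all $3!$ orderings of its vertices, so every vertex of a hyperedge occurs as a middle letter. Since $\mathbb{H}_m$ has no isolated vertex, $U_u=[1]_n$ for all $u$, and in particular $(U_v)_{ij}=1=T_{ij}$.

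\emph{Case $T_{ij}=a$.} I expect this to be the main obstacle, and I aim to show it cannot occur. For each word $x_ux_{u'}x_{u''}$ we have
\[
(U_uU_{u'}U_{u''})_{ij}=\sum_{k,l}(U_u)_{ik}(U_{u'})_{kl}(U_{u''})_{lj}=a,
\]
so for every $k,l$ exactly one of the three factors equals $a$ and the other two equal $1$. Using again that each vertex occurs in the middle of some word, every entry of every $U_u$ lies in $\{1,a\}$.

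Next I would show that each $U_u$ is constant, i.e.\ equal to $[1]_n$ or $[a]_n$. The first idea is to compare the orderings $uu'u''$ and $u'uu''$ of one hyperedge: fixing $k$ and letting $l$ vary, and then symmetrically, should force the entries of $U_{u'}$ to be determined by single entries of $U_u$ and $U_{u''}$. If some vertex of a hyperedge is forced to be $[1]_n$, Proposition~\ref{lem2} (applied to $[1]_nA[1]_n$) pins down the neighbouring matrices. Should this fail, I would instead colour each vertex $u$ by a suitable diagonal-type entry, such as $(U_u)_{ii}$ or $(U_u)_{jj}$, chosen so that the "exactly one factor is $a$" condition transfers to the colours.

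Once every $U_u$ is $[1]_n$ or $[a]_n$, the condition says that each hyperedge has exactly one vertex mapped to $[a]_n$ and two mapped to $[1]_n$. Colouring $u$ by $0$ if $U_u=[a]_n$ and by $1$ otherwise then gives every hyperedge both colours. This is a proper $2$-colouring of $\mathbb{H}_m$, contradicting its choice. Hence $T_{ij}\neq a$, the identity \eqref{id26052550} holds in $\mathbf{M}_n(S_7)$, and the theorem follows. The girth hypothesis is used only inside Lemma~\ref{lem26052501}, not in this argument.
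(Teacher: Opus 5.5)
\textbf{There is a genuine gap: the case $T_{ij}=a$ is never proved.}

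Your reduction, including separating the hypergraph index $m$ from the matrix size $n$ (which the paper's notation blurs), is correct. So are the entrywise criterion ($T+U_v=T$ iff each $T_{ij}\in\{\infty,(U_v)_{ij}\}$) and the case $T_{ij}=1$; these match the paper.

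The case $T_{ij}=a$ carries the whole content of the theorem, and you only announce that each $U_u$ \emph{should} be constant, without an argument. Neither of your two hints works as stated:
\begin{enumerate}
\item Proposition~\ref{lem2} needs \emph{both} outer factors equal to $[1]_n$. Knowing that one vertex of a hyperedge is sent to $[1]_n$ therefore pins down nothing.
\item The diagonal colouring does not transfer automatically when $i\neq j$. Taking $k=l=i$ only gives $(U_p)_{ii}(U_q)_{ii}(U_r)_{ij}=a$. This ties two diagonal entries to an off-diagonal one, and by itself does not exclude a hyperedge whose three diagonal entries are all $1$.
\end{enumerate}

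\textbf{How the paper closes this case.} It first compares the orderings $(u,v,w)$ and $(u,w,v)$, using commutativity and $0$-cancellativity in $S_7$, to show that each $U_v$ has constant diagonal. It then splits into two subcases.
\begin{enumerate}
\item In every hyperedge some vertex $w$ has $(U_w)_{sj}=a$ for some $s$. Then $(U_uU_v)_{is}=1$, so row $i$ of $U_u$ and column $s$ of $U_v$ consist of $1$'s, and the diagonal of $U_v$ is $1$. Any $a$ in $U_v$ would then make $(U_uU_v)_{il}=\infty$. Hence $U_u=U_v=[1]_n$, and $U_w=[a]_n$ by Proposition~\ref{lem2}, which yields a $2$-colouring.
\item Some hyperedge has all three $j$-th columns consisting of $1$'s. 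Then the diagonals are $1$, while $(U_uU_v)_{ii}=a$, a contradiction.
\end{enumerate}

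\textbf{A quicker way to finish your own idea.} Colour each vertex $u$ by $(U_u)_{ii}\in\{1,a\}$.
\begin{enumerate}
\item The identity $(U_p)_{ii}(U_q)_{ii}(U_r)_{ij}=a$ holds for every ordering $(p,q,r)$ of a hyperedge. So no two vertices of a hyperedge have colour $a$.
\item If all three vertices had colour $1$, that identity would force $(U_r)_{ij}=a$ for every vertex $r$ of the hyperedge. The position $k=j$, $l=i$ then gives $(U_p)_{ij}(U_q)_{ji}(U_r)_{ij}=a$ with two factors equal to $a$, which is impossible in $S_7$.
\end{enumerate}
So every hyperedge has exactly one vertex of colour $a$, contradicting the non-$2$-colourability of $\mathbb{H}_m$.

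One of these arguments has to be supplied before the proposal is a proof.
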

\begin{proof}
For each $n \geq 2$, let us fix a vertex $v_n$ of the hypergraph $\mathbb{H}_n$.
We first show that $\mathbf{M}_n(S_7)$ satisfies the identity \eqref{id26052550}.
Indeed, let $\varphi: \{x_v \mid v\in V_n\} \to \mathbf{M}_n(S_7)$ be an arbitrary substitution.
Then
\[
\varphi(\bt_{\mathbb{H}_n}) = \sum_{\{u, v, w\} \in E_n} \varphi(x_u x_v x_w)
= \sum_{\{u, v, w\} \in E_n} \varphi(x_u)\varphi(x_v)\varphi(x_w),
\]
which is denoted by $[T_{ij}]_{n\times n}$
with $T_{ij} \in S_7$ for $1 \leq i, j \leq n$.
For each vertex $s \in V_n$, we write $A^{(s)}$ for $\varphi(x_s)$.
We consider the following three cases.

\textbf{Case 1.} $T_{ij} = \infty$ for all $1 \leq i, j \leq n$.
Then $\varphi(\bt_{\mathbb{H}_n})=[\infty]_n$, and so
\[
\varphi(\bt_{\mathbb{H}_n} + x_{v_n})=\varphi(\bt_{\mathbb{H}_n}) + \varphi(x_{v_n}) = [\infty]_n+ \varphi(x_{v_n})= [\infty]_n,
\]
since $[\infty]_n$ is the additive maximum element of $\mathbf{M}_n(S_7)$.
Consequently, $\varphi(\bt_{\mathbb{H}_n})=\varphi(\bt_{\mathbb{H}_n} + x_{v_n})$.

\textbf{Case 2.} $T_{ij} = 1$ for some $1 \leq i, j \leq n$.
Then the $(i,j)$-entry of $\varphi(x_u)\varphi(x_v)\varphi(x_w)$ equals $1$ for every $\{u, v, w\} \in E_n$,
since $1$ is an additive minimal element of $S_7$.
Hence $(A^{(u)}A^{(v)}A^{(w)})_{ij} = 1$ for all $\{u, v, w\} \in E_n$.
By Proposition~\ref{lem1}, it follows that $A^{(v)}=[1]_n$.
Since $\{v, w, u\}$ and $\{w, u, v\}$ are also hyperedges in $E_n$, the same argument yields $A^{(w)} = [1]_n$ and $A^{(u)} = [1]_n$.
This shows that $\varphi(x_u)=\varphi(x_v)=\varphi(x_w) =[1]_n$ for all $\{u, v, w\} \in E_n$,
and so
\[
\varphi(x_ux_vx_w)=\varphi(x_u)\varphi(x_v)\varphi(x_w)=[1]_n[1]_n[1]_n=[1]_n
\]
for all $\{u, v, w\} \in E_n$.
Consequently, $\varphi(\bt_{\mathbb{H}_n})=[1]_n$.
Therefore,
\[
\varphi(\bt_{\mathbb{H}_n} + x_{v_n})=\varphi(\bt_{\mathbb{H}_n}) + \varphi(x_{v_n}) = [1]_n+ [1]_n= [1]_n=\varphi(\bt_{\mathbb{H}_n}).
\]

\textbf{Case 3.} $T_{ij} = a$ for some $1 \leq i, j \leq n$.
Then
\[
(A^{(u)}A^{(v)}A^{(w)})_{ij}=(\varphi(x_u)\varphi(x_v)\varphi(x_w))_{ij}=(\varphi(x_ux_vx_w))_{ij}=a
\]
for all $\{u, v, w\} \in E_n$,
since $a$ is an additive minimal element of $S_7$.
Consequently, every entry of each $A^{(v)}$ ($v \in V_n$) is either $1$ or $a$. Moreover,
\[
A^{(u)}_{ik} A^{(v)}_{kl} A^{(w)}_{lj} = a \quad \text{for all } 1 \leq k, l \leq n.
\]

We first show that for each $v \in V_n$, the diagonal entries of $A^{(v)}$ are all equal.
Indeed, fix $v \in V_n$ and choose a hyperedge $\{u, v, w\} \in E_n$ (such a hyperedge exists because $\mathbb{H}_n$ has no isolated vertex).
For any $1 \leq k \leq n$, we have
\[
A^{(u)}_{ik} A^{(v)}_{kk} A^{(w)}_{kj} = A^{(u)}_{ik} A^{(w)}_{kj} A^{(v)}_{jj} = a.
\]
Since the multiplicative reduct of $S_7$ is commutative and $0$-cancellative,
it follows that $A^{(v)}_{kk} = A^{(v)}_{jj}$, as required.

\textbf{Subcase 3.1.}
For any hyperedge $\{u, v, w\} \in E_n$, one of its vertices, say $w$, satisfies $A^{(w)}_{sj} = a$ for some $1 \leq s \leq n$.
Then $(A^{(u)}A^{(v)})_{is}=1$.
This implies that the $i$-th row of $A^{(u)}$ and the $s$-th column of $A^{(v)}$ consist entirely of $1$'s.
Furthermore, the diagonal entries of $A^{(v)}$ are all equal to $1$.

We now show that every entry of $A^{(v)}$ must be $1$.
Indeed, suppose that for some $k, l$ we have $A^{(v)}_{kl} = a$
(since all entries are either $1$ or $a$ in this case).
Then because the $i$-th row of $A^{(u)}$ consists entirely of $1$'s,
\[
(A^{(u)}A^{(v)})_{il} = \sum_{p=1}^{n} A^{(u)}_{ip} A^{(v)}_{pl}=\sum_{p=1}^{n} 1\cdot A^{(v)}_{pl}=\sum_{p=1}^{n}  A^{(v)}_{pl}=\infty,
\]
since $A^{(v)}_{kl} = a$ and $A^{(v)}_{ll} = 1$ (so the sum contains both $a$ and $1$, yielding $\infty$ in $S_7$).
This leads to $(A^{(u)}A^{(v)}A^{(w)})_{ij} = \infty$, a contradiction.
Hence $A^{(v)}_{kl} \neq a$ for all $k, l$, and thus $A^{(v)}_{kl} = 1$ for all $k, l$. Therefore, $A^{(v)} = [1]_n$.

Similarly, from $A^{(w)}_{sj} = a$ we obtain $(A^{(v)}A^{(u)})_{is} = 1$, and by the same argument as above, $A^{(u)} = [1]_n$.
Since $(A^{(u)}A^{(w)}A^{(v)})_{ij}=a$, it now follows that $([1]_nA^{(w)}[1]_n)_{ij}=a$.
By Proposition~\ref{lem2}, $A^{(w)}=[a]_n$.
Thus, for every hyperedge $\{u, v, w\} \in E_n$, we have $\{A^{(u)}, A^{(v)}, A^{(w)}\} = \{[1]_n, [1]_n, [a]_n\}$.
Consequently, $\mathbb{H}_n$ is $2$-colourable, a contradiction.

\textbf{Subcase 3.2.}
There exists a hyperedge $\{u, v, w\} \in E_n$, $A^{(u)}_{sj}=A^{(v)}_{sj}=A^{(w)}_{sj} = 1$ for all $1 \leq s \leq n$.
Then the diagonal entries of $A^{(u)}$ and $A^{(v)}$ are all equal to $1$,
and $(A^{(u)}A^{(v)})_{is}=a$ for all $1 \leq s \leq n$.
In particular, $(A^{(u)}A^{(v)})_{ii}=a$.
This implies that $A^{(u)}_{ii}A^{(v)}_{ii}=a$, and so $A^{(u)}_{ii}=a$ or $A^{(v)}_{ii}=a$,
a contradiction.

Thus both subcases lead to contradictions; therefore Case 3 cannot occur.

Since Cases 1, 2, 3 exhaust all possibilities, we have shown that for every substitution $\varphi$,
\[
\varphi(\mathbf{t}_{\mathbb{H}_n}) = \varphi(\mathbf{t}_{\mathbb{H}_n} + x_{v_n}).
\]
Hence $\mathbf{M}_n(S_7)$ satisfies the identity \eqref{id26052550}.
By Lemma~\ref{lem26052501}, every variety in the interval
$[\mathsf{V}(S_c(abc)), \mathsf{V}(\mathbf{M}_n(S_7))]$ is nonfinitely based.
\end{proof}

\begin{cor}\label{cor26060320}
Let $n\geq 2$ be an integer.
Every variety in $[\mathsf{V}(S_7), \mathsf{V}(\mathbf{M}_n(S_7))]$ is nonfinitely based.
In particular, the following varieties are nonfinitely based and form an infinite ascending chain:
\begin{equation*}
\mathsf{V}(S_7) < \mathsf{V}(\mathbf{M}_2(S_7)) \leq \mathsf{V}(\mathbf{M}_3(S_7)) \leq \cdots \leq \mathsf{V}(\mathbf{M}_n(S_7)) \leq \mathsf{V}(\mathbf{M}_{n+1}(S_7)) \leq \cdots.
\end{equation*}
\end{cor}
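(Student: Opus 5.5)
The first assertion follows from Theorem~\ref{thm26052701}. The text preceding that theorem (citing \cite[Figure 1]{gjr25}) gives $S_c(abc)\in\mathsf{V}(S_7)$, hence $\mathsf{V}(S_c(abc))\leq \mathsf{V}(S_7)$. So every variety $\mathcal{V}$ with $\mathsf{V}(S_7)\leq \mathcal{V}\leq \mathsf{V}(\mathbf{M}_n(S_7))$ also lies in $[\mathsf{V}(S_c(abc)), \mathsf{V}(\mathbf{M}_n(S_7))]$, and is therefore nonfinitely based. In particular each $\mathsf{V}(\mathbf{M}_n(S_7))$ is nonfinitely based.

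For the chain, all the non-strict inclusions $\mathsf{V}(S_7)\leq\mathsf{V}(\mathbf{M}_2(S_7))\leq\mathsf{V}(\mathbf{M}_3(S_7))\leq\cdots$ come directly from Corollary~\ref{coro26052420}, applied with $S=S_7$. It remains to make the first inclusion strict. For this I want an identity that holds in $S_7$ but fails in $\mathbf{M}_2(S_7)$.

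\textbf{Identity 1: $xy\approx yx$.} Multiplication in $S_7$ is commutative, so $S_7$ satisfies $xy\approx yx$. In $\mathbf{M}_2(S_7)$ I would try
\[
A=\begin{bmatrix}1&a\\ a&a\end{bmatrix},\qquad B=\begin{bmatrix}1&1\\ 1&1\end{bmatrix}.
\]
Here $(AB)_{11}=1\cdot1+a\cdot1=1+a=\infty$, and likewise $(BA)_{11}=1+a=\infty$, so the $(1,1)$ entries do not separate the products. I would therefore rather use the criterion of Proposition~\ref{pro3.33}. Choose $A$ whose first row is all $1$'s, and $B$ whose first column is all $1$'s but which is not equal to $[1]_2$. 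Then $(AB)_{11}=1$. On the other hand, $(BA)_{11}=1$ would require the first row of $B$ to be all $1$'s, which we can prevent.

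\textbf{Concrete pair.} Take
\[
A=\begin{bmatrix}1&1\\ \infty&\infty\end{bmatrix},\qquad B=\begin{bmatrix}1&\infty\\ 1&\infty\end{bmatrix}.
\]
Then $(AB)_{11}=1\cdot1+1\cdot1=1$, while $(BA)_{11}=1\cdot1+\infty\cdot\infty=\infty$. Hence $AB\neq BA$, so $\mathbf{M}_2(S_7)$ fails $xy\approx yx$ and $\mathsf{V}(S_7)<\mathsf{V}(\mathbf{M}_2(S_7))$.

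\textbf{Infinite chain.} Each term of the chain is nonfinitely based by the first part, and the chain continues indefinitely by Corollary~\ref{coro26052420}.

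\textbf{Caveat.} The only nonroutine step is choosing the separating identity. If the infinite chain were meant to be strictly ascending at every step, that would need more (Section~4). As stated, only the first inclusion is claimed strict, and commutativity settles it.
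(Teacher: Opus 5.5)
Your proof is correct and essentially the same as the paper's. You get the first claim from Theorem~\ref{thm26052701} via $S_c(abc)\in\mathsf{V}(S_7)$, the inclusions from Corollary~\ref{coro26052420}, and the strict first step from the identity $xy\approx yx$; your explicit pair $A,B$ is a valid witness, since $(AB)_{11}=1\neq\infty=(BA)_{11}$. One aside on your caveat: Section~4 does not give strictness of the later inclusions, since the paper leaves $\mathsf{V}(\mathbf{M}_n(S_7))=\mathsf{V}(\mathbf{M}_{n+1}(S_7))$ open (Section~5 even suggests the chain stabilizes), but this does not affect your argument.
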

\begin{proof}
This follows directly from Theorem~\ref{thm26052701}, Corollary~\ref{coro26052420},
and the fact that the identity $xy \approx yx$ is satisfied by $S_7$, but fails in $\mathbf{M}_2(S_7)$.
\end{proof}

To end this section, we apply Theorem~\ref{thm26052701} to solve the finite basis problem for several five-element ai-semirings.
The finite basis problem for ai-semirings of order at most four has been nearly settled,
with only two algebras remaining (see \cite{rlyc, rlzc, ryy, yrzs}).
Up to isomorphism, there are exactly $15751$ ai-semirings of order five (see \cite{edwards2025}),
which are denoted by $S_{(5, i)}$, $1 \leq i \leq 15751$.
We assume that the carrier set of each of these semirings is $\{1, 2, 3, 4, 5\}$.

Applying Theorem~\ref{thm26052701}, we obtain that $73$ five-element ai-semirings are nonfinitely based.
The verification for each of these algebras follows the same pattern as for the representative example $S_{(5,5158)}$, and is therefore omitted for brevity.

\begin{table}[ht]
\centering
\caption{The Cayley tables of $S_{(5,5158)}$} \label{tb545}
\begin{tabular}{c|ccccc}
$+$ &  $1$ & $2$ & $3$ & $4$ & $5$ \\
\hline
$1$ & $1$ & $1$ & $1$ & $1$ & $1$ \\
$2$ & $1$ & $2$ & $1$ & $1$ & $2$ \\
$3$ & $1$ & $1$ & $3$ & $3$ & $1$ \\
$4$ & $1$ & $1$ & $3$ & $4$ & $1$ \\
$5$ & $1$ & $2$ & $1$ & $1$ & $5$ \\
\end{tabular}\qquad
\begin{tabular}{c|ccccc}
$\cdot$ & $1$ & $2$ & $3$ & $4$ & $5$ \\
\hline
$1$ & $1$ & $1$ & $1$ & $1$ & $1$ \\
$2$ & $1$ & $1$ & $1$ & $2$ & $1$ \\
$3$ & $1$ & $1$ & $1$ & $1$ & $1$ \\
$4$ & $1$ & $1$ & $3$ & $4$ & $5$ \\
$5$ & $1$ & $1$ & $1$ & $5$ & $1$ \\
\end{tabular}
\end{table}
\begin{cor}
The five-element ai-semiring $S_{(5,5158)}$ is nonfinitely based.
\end{cor}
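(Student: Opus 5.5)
The plan is to use Theorem~\ref{thm26052701}: it suffices to show that $\mathsf{V}(S_{(5,5158)})$ lies in the interval $[\mathsf{V}(S_c(abc)), \mathsf{V}(\mathbf{M}_n(S_7))]$ for some $n$. Since $S_7$ is a $3$-element algebra, the natural choice is $n=2$. We then need two things.
\begin{enumerate}[label=(\roman*)]
\item $S_{(5,5158)}\in\mathsf{V}(\mathbf{M}_2(S_7))$. The cleanest way is to exhibit $S_{(5,5158)}$ as a subsemiring, or a homomorphic image of a subsemiring, of $\mathbf{M}_2(S_7)$ or of a small power of it.
\item $S_c(abc)\in\mathsf{V}(S_{(5,5158)})$, or, more simply, $S_7\in\mathsf{V}(S_{(5,5158)})$. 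The latter suffices because $S_c(abc)\in\mathsf{V}(S_7)$, as recalled before Theorem~\ref{thm26052701}.
\end{enumerate}

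For (ii), read the tables. Element $1$ is the additive maximum and the multiplicative zero. Element $4$ is a multiplicative identity. Consider the subset $\{1,2,4\}$. Its sums are $2+4=1$, $2+2=2$ and $4+4=4$. Its products are $2\cdot 2=1$ and $2\cdot4=2$. This is exactly $S_7$ under $4\mapsto 1$, $2\mapsto a$, $1\mapsto\infty$, so $S_7$ embeds in $S_{(5,5158)}$. Hence $\mathsf{V}(S_c(abc))\le\mathsf{V}(S_7)\le\mathsf{V}(S_{(5,5158)})$.

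For (i), look for five matrices in $\mathbf{M}_2(S_7)$:
\begin{itemize}
\item $[\infty]_2$ for element $1$;
\item $I=\begin{bmatrix}1&\infty\\ \infty&1\end{bmatrix}$ for element $4$, which acts as an identity;
\item $aI$ for element $2$, so that $\{[\infty]_2,I,aI\}\cong S_7$.
\end{itemize}
Elements $3$ and $5$ must satisfy $3\le 4$ (since $3+4=3$), $4\cdot3=3$, $3\cdot4=1$, $3^2=1$, $3\cdot5=5\cdot3=1$, $5\cdot4=5$, $4\cdot5=5$, $5^2=1$, $5+2=2$ and $5+3=1$. These relations are not commutative and involve one-sided identity behaviour. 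This suggests that element $4$ should not be the full identity $I$ but a non-central idempotent, with $3$ and $5$ built from matrix units such as $E_{11}$-type entries.

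The main obstacle is finding explicit matrices realising the remaining relations. In particular we need $2\cdot4=2$ but $4\cdot2=2$ as well, and $2\cdot3=1$ while $4\cdot 3=3$, so the choice of $4$ interacts with everything else. I would first try a direct computer-free search. Take $4=\begin{bmatrix}1&\infty\\ \infty&1\end{bmatrix}$ and $2=aI$, and try $5=\begin{bmatrix}a&\infty\\ \infty&\infty\end{bmatrix}$ and $3=\begin{bmatrix}\infty&\infty\\ 1&\infty\end{bmatrix}$-like matrices, checking every entry of both tables. Note that $3\cdot4=1$ with $4$ a true identity is impossible, so $4$ must be an idempotent that acts on the left of $3$ but annihilates it on the right.

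If $\mathbf{M}_2(S_7)$ proves too tight, I would pass to $\mathbf{M}_2(S_7)\times\mathbf{M}_2(S_7)$ or to $\mathbf{M}_3(S_7)$. This is allowed because Theorem~\ref{thm26052701} holds for every $n$ and Theorem~\ref{thm1} gives the embeddings. As a last resort, I would realise $S_{(5,5158)}$ as a quotient of a subsemiring. Once (i) and (ii) are in hand, Theorem~\ref{thm26052701} gives the conclusion immediately.
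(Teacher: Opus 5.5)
\textbf{Verdict: genuine gap.} Your overall strategy matches the paper's: place $\mathsf{V}(S_{(5,5158)})$ in $[\mathsf{V}(S_7),\mathsf{V}(\mathbf{M}_2(S_7))]$ and invoke Corollary~\ref{cor26060320}. However, part (ii) rests on a misreading of the tables, and part (i), which is the whole content of the corollary, is never carried out.

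\textbf{Part (ii).} Row $4$ of the multiplication table is $1,1,3,4,5$. So $4\cdot 2=1$ while $2\cdot 4=2$. Hence $4$ is not an identity, and $\{1,2,4\}$ is a noncommutative subalgebra, not a copy of $S_7$. The correct copy is $\{1,4,5\}$, since $4\cdot5=5\cdot4=5$, $5\cdot5=1$ and $4+5=1$. Also, $3+4=3$ means $4\le 3$, not $3\le 4$. This part is easy to repair.

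\textbf{Part (i).} Your starting matrices cannot work. In $S_7$ the element $\infty$ is additively absorbing, not an additive zero. So $I=\begin{bmatrix}1&\infty\\ \infty&1\end{bmatrix}$ has an $\infty$ in every row and every column, and Corollary~\ref{coro26060350} gives $IM=MI=[\infty]_2$ for all $M$. In particular $I^2=[\infty]_2$, so $\{[\infty]_2,I,aI\}$ is not $S_7$. Nor do ``matrix-unit'' idempotents exist: $[1]_n$ and $[\infty]_n$ are the only idempotents of $\mathbf{M}_n(S_7)$. Passing to $\mathbf{M}_3(S_7)$ therefore does not help.

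\textbf{The missing idea: use this rigidity.} Let $\psi\colon S_{(5,5158)}\to\mathbf{M}_n(S_7)$ be a homomorphism with $\psi(4)\neq\psi(1)$. Both are idempotents and $\psi(4)\le\psi(1)$, so $\psi(4)=[1]_n$ and $\psi(1)=[\infty]_n$. Left multiplication by $[1]_n$ replaces every row by the column sums, and right multiplication replaces every column by the row sums. Consequently:
\begin{enumerate}[label=(\alph*)]
\item From $4\cdot5=5\cdot4=5$ and $5^2=1$, we get $\psi(5)\in\{[a]_n,[\infty]_n\}$.
\item From $4\cdot3=3$ and $4\le3$, all rows of $\psi(3)$ equal one vector $r$ with entries in $\{1,\infty\}$.
\item If $\psi(5)=[a]_n$, then $5\cdot3=1$ forces $a r_j=\infty$ for every $j$, so $\psi(3)=[\infty]_n$.
\end{enumerate}
If instead $\psi(4)=\psi(1)$, then $\psi(3)=\psi(4\cdot 3)=\psi(1)$ and $\psi(5)=\psi(1)$. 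So no single coordinate separates both $3$ and $5$ from $1$, and no $\mathbf{M}_n(S_7)$ contains $S_{(5,5158)}$.

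One must use a product, and your fallback $\mathbf{M}_2(S_7)\times\mathbf{M}_2(S_7)$ is the right one. Let $A=\begin{bmatrix}a&a\\ \infty&\infty\end{bmatrix}$ and $B=\begin{bmatrix}1&\infty\\ 1&\infty\end{bmatrix}$. Define
\begin{itemize}
\item $\varphi_1\colon 1,3\mapsto[\infty]_2,\ 2\mapsto A,\ 4\mapsto[1]_2,\ 5\mapsto[a]_2$;
\item $\varphi_2\colon 1,2,5\mapsto[\infty]_2,\ 3\mapsto B,\ 4\mapsto[1]_2$.
\end{itemize}
Checking both tables entry by entry shows each $\varphi_i$ is a homomorphism, and $(\varphi_1,\varphi_2)$ is injective.

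\textbf{On the paper's own map.} The same computation shows the printed map in the paper's proof is not a homomorphism as stated. It sends $3\mapsto(B,B,[\infty]_2)$ and $5\mapsto([a]_2,[a]_2,[a]_2)$. But $[a]_2B=\begin{bmatrix}a&\infty\\ a&\infty\end{bmatrix}\neq[\infty]_2$, although $5\cdot 3=1$. Likewise $(AB)_{11}=a$, although $2\cdot3=1$. The two-coordinate map above repairs this.
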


\begin{proof}
By Corollary~\ref{cor26060320}, it suffices to show that $\mathsf{V}(S_{(5,5158)})$ lies in the interval $[\mathsf{V}(S_7), \mathsf{V}(\mathbf{M}_2(S_7))]$.
Indeed, it is easy to check that $S_7$ is isomorphic to the subalgebra $\{1,4,5\}$ of $S_{(5,5158)}$, so $\mathsf{V}(S_7)$ is a subvariety of $\mathsf{V}(S_{(5,5158)})$.

Now consider the mapping $\varphi$ from $S_{(5,5158)}$ to the direct product of three copies of $\mathbf{M}_2(S_7)$ defined by
\[
\varphi(1)= \bigl([\infty]_2,\; [\infty]_2,\; [\infty]_2\bigr), \quad \varphi(2) = \bigl( A,\; [\infty]_2,\; [\infty]_2 \bigr),
\]
\[
\varphi(3) = \bigl( B,\; B,\; [\infty]_2 \bigr), \quad \varphi(4) = \bigl([1]_2,\; [1]_2,\; [1]_2\bigr),
\quad \varphi(5) = \bigl([a]_2,\; [a]_2,\; [a]_2\bigr),
\]
where
\[
A = \begin{bmatrix} a & a \\ \infty & \infty \end{bmatrix},\quad
B = \begin{bmatrix} 1 & \infty \\ 1 & \infty \end{bmatrix}.
\]
It is straightforward to verify that $\varphi$ is an embedding. Hence $\mathsf{V}(S_{(5,5158)})$ is a subvariety of $\mathsf{V}(\mathbf{M}_2(S_7))$.
\end{proof}

\section{The interval $[\mathsf{V}(S_7), \mathsf{V}(\mathbf{M}_2(S_7))]$}
Corollary~\ref{cor26060320} tells us that $\mathsf{V}(S_7)$ is a proper subvariety of $\mathsf{V}(\mathbf{M}_2(S_7))$.
This naturally raises the question: what is the size of the interval $[\mathsf{V}(S_7), \mathsf{V}(\mathbf{M}_2(S_7))]$?
In this section, we show that this interval contains at least countably infinitely many distinct varieties.

Let $\mathbf{w}$ be a word and $S$ an ai-semiring.
We say that $\mathbf{w}$ is \emph{minimal} for $S$ if
whenever $\mathbf{w} \approx \mathbf{w} + \mathbf{u}$ holds in $S$ for some word $\mathbf{u}$,
it follows that $\mathbf{u} = \mathbf{w}$.
This is equivalent to $\mathbf{w}$ being an \emph{isoterm} for $S$;
that is, $S$ satisfies no nontrivial identity of the form $\mathbf{w} \approx \mathbf{w}'$.
The following result is due to Ren et al. \cite[Lemma 5.6]{ren2023}.

\begin{lem}\label{iso}
Let $\mathbf{w}$ be a word and $S$ an ai-semiring.
Then $\bw$ is minimal for $S$ if and only if the flat semiring $S(\bw)$ belongs to the variety $\mathsf{V}(S)$.
\end{lem}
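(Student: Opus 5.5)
This lemma is quoted from Ren et al.\ \cite[Lemma 5.6]{ren2023}, but a self-contained argument is short. I would prove the two directions separately. Throughout I use that $S(\bw)$ is a flat semiring with zero $0$, which is also its additive maximum.

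\emph{($\Leftarrow$)} Suppose $S(\bw)\in\mathsf{V}(S)$ and $S$ satisfies $\bw\approx\bw+\bu$. Then $S(\bw)$ satisfies this identity too. Consider the substitution sending each variable $x$ occurring in $\bw$ to the one-letter word $x\in S(\bw)$. Every other variable (possibly occurring in $\bu$) is sent to some fixed letter of $\bw$. Under this substitution $\bw$ evaluates to the nonzero element $\bw$.

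Since $S(\bw)$ is flat, $\bw+\bu$ evaluates to $\bw$ only if $\bu$ also evaluates to the element $\bw$. A product of letters in $S(\bw)$ is nonzero exactly when it is a subword of $\bw$, and it then equals that word. So the value of $\bu$ is literally the word obtained by reading $\bu$ under the substitution, and this must equal $\bw$. The substitution restricted to $\mathrm{cont}(\bw)$ is the identity on letters, and the letters of $\bu$ outside $\mathrm{cont}(\bw)$ would make the image differ in content or length. Hence $\bu$ uses only variables of $\bw$, and $\bu=\bw$. This step needs a small check that extra variables cannot accidentally reproduce $\bw$. To rule that out, I would instead send outside variables to $0$, which forces $\bu$'s value to $0\neq\bw$ whenever $\bu$ contains such a variable.

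\emph{($\Rightarrow$)} Suppose $\bw$ is minimal for $S$. I want to show that $S(\bw)$ satisfies every identity $\mathbf p\approx\mathbf q$ of $S$. By symmetry and idempotence, it suffices to handle identities of the form $\mathbf p\approx\mathbf p+\mathbf q'$ with $\mathbf q'$ a word. So fix a substitution $\psi$ into $S(\bw)$ and assume $\psi(\mathbf p)\neq0$; we must show $\psi(\mathbf q')=\psi(\mathbf p)$.

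Since $S(\bw)$ is flat and $\psi(\mathbf p)\neq 0$, every word $\mathbf p_i$ in $\mathbf p$ has the same value $\mathbf v$, a subword of $\bw$. Write $\bw=\mathbf a\mathbf v\mathbf b$. For each variable $x$ with $\psi(x)$ nonzero, $\psi(x)$ is a subword of $\bw$, so I treat it as the corresponding word over the variables of $\bw$. This defines a homomorphism $\theta$ of terms. Applying $\theta$ to the identity and multiplying by $\mathbf a$ on the left and $\mathbf b$ on the right, $S$ satisfies
\[
\mathbf a\,\theta(\mathbf p)\,\mathbf b\approx \mathbf a\,\theta(\mathbf p)\,\mathbf b+\mathbf a\,\theta(\mathbf q')\,\mathbf b .
\]
Here $\mathbf a\,\theta(\mathbf p)\,\mathbf b=\bw$ (as a one-word term, after idempotence). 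Minimality of $\bw$ then gives $\mathbf a\,\theta(\mathbf q')\,\mathbf b=\bw$, so $\theta(\mathbf q')=\mathbf v$, and therefore $\psi(\mathbf q')=\mathbf v$.

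The main obstacle is handling variables that $\psi$ sends to $0$. If such a variable occurs in $\mathbf p$, then $\psi(\mathbf p)=0$ and there is nothing to prove, so assume none does. The problem case is a variable occurring only in $\mathbf q'$. I would substitute for it a word making $\theta(\mathbf q')$ longer than $\mathbf v$, for instance $\bw$ itself. Then minimality forces equality of words, which is impossible because $\mathbf a\,\theta(\mathbf q')\,\mathbf b$ would be longer than $\bw$. This contradiction shows no such variable occurs. Once that case is excluded, $S(\bw)\models\mathbf p\approx\mathbf q$, so $S(\bw)\in\mathsf V(S)$.
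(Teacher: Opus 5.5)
The paper gives no proof of this lemma; it quotes it from \cite{ren2023}, so there is no internal argument to compare yours with. Your self-contained proof is correct in substance. Three steps carry it, and they are the right ones:
\begin{itemize}
\item the reduction to identities $\mathbf p\approx\mathbf p+\mathbf q'$ with $\mathbf q'$ a word;
\item turning a substitution $\psi$ into $S(\bw)$ with $\psi(\mathbf p)=\mathbf v\neq 0$ into a word substitution $\theta$;
\item padding with $\bw=\mathbf a\mathbf v\mathbf b$, so that the instance becomes an identity of the form $\bw\approx\bw+\bu$ and minimality applies.
\end{itemize}
You use one fact in both directions: a product of nonzero elements of $S(\bw)$ is nonzero exactly when the concatenation of the corresponding subwords is a subword of $\bw$, and it then equals that concatenation. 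This holds because every factor of a subword of $\bw$ is again a subword. You should state it once for products of subwords, not only for products of letters.

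Two details need repair.

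\emph{In ($\Leftarrow$).} Your first substitution, which sends outside variables to a fixed letter of $\bw$, does fail. For example, take $\bw=x_1x_2$, $\bu=yx_2$ and $y\mapsto x_1$: then $\bu$ evaluates to $\bw$. Keep only the version that sends outside variables to $0$.

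\emph{In ($\Rightarrow$).} Substituting $\bw$ itself for a variable $y$ of $\mathbf q'$ that is sent to $0$ and does not occur in $\mathbf p$ does not always produce a longer word. If $\mathbf q'=y$ and $\mathbf v=\bw$ (so $\mathbf a$ and $\mathbf b$ are empty), you only obtain the trivial identity $\bw\approx\bw+\bw$. Yet this is exactly a case you must exclude, since there $\psi(\mathbf p+y)=\bw+0=0\neq\bw$. Substitute $\bw^2$, or a variable not occurring in $\bw$, instead. Then $\mathbf a\,\theta(\mathbf q')\,\mathbf b\neq\bw$ in every case, and minimality gives the contradiction.

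With these two fixes the argument is complete.

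For comparison, there is another standard route for ($\Rightarrow$). Take the free algebra of $\mathsf V(S)$ on the letters of $\bw$ and collapse every element not represented by a subword of $\bw$. Minimality of $\bw$, which passes to its subwords, gives three things:
\begin{itemize}
\item each subword forms a singleton class;
\item distinct subwords are incomparable;
\item the collapsed set is an upward-closed ideal.
\end{itemize}
Hence the quotient is flat and isomorphic to $S(\bw)$. That route replaces your case analysis on variables sent to $0$ with a single congruence check. Yours verifies the identities directly and avoids constructing the relatively free algebra.
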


A \emph{linear} word is a word without repeated variables.
\begin{pro}\label{pro26053150}
Every linear word is minimal for $\mathbf{M}_2(S_7)$.
\end{pro}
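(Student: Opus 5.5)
The plan is to argue directly from the definition of minimality. Let $\bw=x_1x_2\cdots x_m$ be a linear word, and suppose $\mathbf{M}_2(S_7)$ satisfies $\bw\approx\bw+\bu$ for some word $\bu$. The key observation is about the matrices $[1]_2$ and $[a]_2$. Both are additively minimal, and in fact no sum $C+D$ with $C\neq D$ can equal $[1]_2$ or $[a]_2$. So whenever a substitution $\varphi$ sends $\bw$ to a matrix all of whose entries lie in $\{1,a\}$, the identity forces $\varphi(\bu)=\varphi(\bw)$ entrywise. The proof then consists of choosing substitutions that make $\varphi(\bw)$ such a matrix while separating $\bu$ from $\bw$.

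\emph{Step 1 (content and multiplicities).} Let $y$ be a variable of $\bu$ not occurring in $\bw$. Send every variable to $[1]_2$ except $y\mapsto[a]_2$. Then $\varphi(\bw)=[1]_2$, while $\varphi(\bu)$ has entries equal to $a$ or $\infty$, a contradiction. Next fix $x_j$, send $x_j\mapsto[a]_2$ and all other variables to $[1]_2$. Since $[1]_2[a]_2=[a]_2[1]_2=[a]_2$ and $[a]_2[a]_2=[\infty]_2$, we get $\varphi(\bw)=[a]_2$. The value $\varphi(\bu)$ is $[1]_2$, $[a]_2$ or $[\infty]_2$ according as $x_j$ occurs $0$, $1$ or at least $2$ times in $\bu$. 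Hence every $x_j$ occurs exactly once in $\bu$, so $\bu$ is a permutation $x_{\sigma(1)}\cdots x_{\sigma(m)}$ of $\bw$.

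\emph{Step 2 (order).} It remains to show $\sigma$ is the identity. Equivalently, for each pair $p<q$ we must show $x_p$ precedes $x_q$ in $\bu$. Non-commutativity of $\mathbf{M}_2(S_7)$ is already witnessed by
\[
A=\begin{bmatrix}1&1\\ a&a\end{bmatrix},\qquad B=\begin{bmatrix}1&a\\ 1&a\end{bmatrix},
\]
for which $AB=\begin{bmatrix}1&a\\ a&\infty\end{bmatrix}$ while $(BA)_{11}=\infty$. The idea is to send $x_p\mapsto A$ and $x_q\mapsto B$, and to send the remaining variables to matrices that do not destroy the relevant finite entry. We then compare a chosen entry $(i,j)$ at which $\varphi(\bw)_{ij}\in\{1,a\}$. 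Because addition in $S_7$ is idempotent and $1+a=\infty$, comparing a single finite entry is enough: $\varphi(\bw)_{ij}+\varphi(\bu)_{ij}=\varphi(\bw)_{ij}$ forces equality of those entries. For this reason we do not need the whole of $\varphi(\bw)$ to be finite.

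\emph{Main obstacle.} The hard part is choosing the values of the other variables $x_k$, $k\neq p,q$. The naive choice $[1]_2$ fails, since for instance $([1]_2A)_{11}=1+a=\infty$. Every entry of a product of $2\times2$ matrices over $S_7$ is a sum over all paths, and one $\infty$ (or a mixture of $1$ and $a$) kills it, so there is no usable identity-like filler. What I would try first is to reduce to adjacent pairs $q=p+1$: if $\sigma$ is not the identity, some $x_p$ and $x_{p+1}$ appear in reversed order in $\bu$. For the variables $x_1,\dots,x_{p-1}$ before $x_p$ I would use matrices whose appropriate row is constant, and for those after $x_{p+1}$ matrices whose appropriate column is constant, tuned as in Proposition~\ref{pro3.33}. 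The goal is that the $(1,1)$ entry of $\varphi(\bw)$ is finite while the same entry of $\varphi(\bu)$ picks up either $BA$ or a mixed $1/a$ sum and so equals $\infty$. Candidates are matrices of the shapes of $A$ and $B$ themselves, with $a$ placed so that each path through the product contains $a$ at most once and consistently.

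If such direct substitutions prove awkward, I would instead use Lemma~\ref{iso}. This means exhibiting $S(x_1\cdots x_m)$, which by Step 1 is essentially a flat semiring on the segments $x_i\cdots x_j$, inside $\mathsf{V}(\mathbf{M}_2(S_7))$. One would represent each segment by a tuple of matrices in a suitable power of $\mathbf{M}_2(S_7)$, as in the proof that $S_{(5,5158)}$ lies in $\mathsf{V}(\mathbf{M}_2(S_7))$, with one coordinate for each pair $(p,q)$ recording the order of $x_p$ and $x_q$.
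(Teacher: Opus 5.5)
Your Step~1 is correct, and it even avoids the external result the paper invokes for $c(\bu)\subseteq c(\bw)$. But Step~2, which is the real content of the proposition, is never carried out. You list strategies, and a fallback through Lemma~\ref{iso} with no construction, so nothing in the proposal shows that $\sigma$ is the identity.

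Your diagnosis of the obstacle also points the wrong way. You discard $[1]_2$ as a filler because $[1]_2A=[\infty]_2$. The paper, however, sends \emph{every} other variable to $[1]_2$ and exploits exactly this one-sidedness.
\begin{itemize}
\item For $A=\begin{bmatrix}1&1\\ a&a\end{bmatrix}$ one has $A[1]_2=A$ but $[1]_2A=[\infty]_2$. So $x_1\mapsto A$, with all other variables $\mapsto[1]_2$, gives $\varphi(\bw)=A$, while $\varphi(\bu)=[\infty]_2$ unless $\bu$ begins with $x_1$.
\item For your $B$ and $C=\begin{bmatrix}a&a\\ 1&1\end{bmatrix}$ one has $[1]_2B=B$, $B[1]_2=[\infty]_2$, $BC=[a]_2$, and $[1]_2[a]_2=[a]_2[1]_2=[a]_2$. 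So $x_{j-1}\mapsto B$, $x_j\mapsto C$, with all others $\mapsto[1]_2$, gives $\varphi(\bw)=[a]_2$. Meanwhile $\varphi(\bu)=[\infty]_2$ as soon as $x_{j-1}$ is immediately followed in $\bu$ by a variable other than $x_j$.
\end{itemize}
Induction on the length of the common prefix of $\bu$ and $\bw$ then gives $\bu=\bw$.

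The idea you are missing is to test the successor relation (is $x_j$ immediately after $x_{j-1}$?) rather than the relative order of an arbitrary pair. The tool is a filler that acts as an identity on one side only.

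Your own reduction to an adjacent descent also goes through with this substitution. Suppose $x_{p+1}$ precedes $x_p$ in $\bu$, and put $x_p\mapsto B$, $x_{p+1}\mapsto C$, others $\mapsto[1]_2$; then $\varphi(\bw)=[a]_2$.
\begin{itemize}
\item If $x_p$ is followed by a filler, then $\varphi(\bu)=[\infty]_2$.
\item If $x_p$ is the last letter, then $\varphi(\bu)$ is either $[\infty]_2$ (if anything precedes $x_{p+1}$, since $[1]_2C=[\infty]_2$) or $CB$ (using $C[1]_2=C$). The matrix $CB$ has $(1,2)$-entry $\infty$.
\end{itemize}
In every case $\varphi(\bw)+\varphi(\bu)\neq[a]_2$.

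By contrast, your pairing $x_p\mapsto A$, $x_q\mapsto B$ with $[1]_2$ fillers makes $\varphi(\bw)=[\infty]_2$ whenever $p>1$ or $q<m$. That substitution can therefore only detect whether $x_1$ comes first and $x_m$ comes last. This is why the "neutral filler" you were looking for seemed necessary, when it is not.
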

\begin{proof}
Let $\bv = x_1 x_2 \cdots x_m$ be an arbitrary linear word.
Suppose that $\bu$ is a word such that the identity $\bv \approx \bv+\bu$ holds in $\mathbf{M}_2(S_7)$.
For any substitution $\varphi : \{x_1, x_2, \ldots, x_m\} \to \mathbf{M}_2(S_7)$, we have that $\varphi(\bu) \leq \varphi(\bv)$.
For a word $\bw$, let $c(\bw)$ denote the set of all variables occurring in $\bw$.
Since $S_7$ can be embedded into $\mathbf{M}_2(S_7)$,
it follows from \cite[Proposition 5.5]{jrz} that $c(\bu) \subseteq c(\bv)$.
Conversely, if $c(\mathbf{v}) \nsubseteq c(\mathbf{u})$, then there exists $x_j \in c(\mathbf{v})$ such that $x_j \notin c(\mathbf{u})$.
Consider the substitution $\varphi_j$ defined by $\varphi_j(x_j) = [a]_2$ and $\varphi_j(x) = [1]_2$ for all other variables $x$.
Then $\varphi_j(\mathbf{u}) = [1]_2$ and $\varphi_j(\mathbf{v}) = [a]_2$, which would imply $[1]_2 \leq [a]_2$, a contradiction.
Thus $c(\bv) \subseteq c(\bu)$ and so $c(\bu) = c(\bv)$.

Next, we show that $\mathbf{u}$ is a linear word.
Indeed, suppose that some variable $x_j$ occurs at least twice in $\mathbf{u}$.
Then $\varphi_j(\mathbf{u}) = [\infty]_2$ and $\varphi_j(\mathbf{v}) = [a]_2$, which would imply $[\infty]_2 \leq [a]_2$, a contradiction.
Hence $\mathbf{u}$ is linear.

By the above observations, we may assume that $\mathbf{u} = x_{i_1} x_{i_2} \cdots x_{i_m}$,
where $\{i_1, \dots, i_m\}$ is a permutation of $\{1, \dots, m\}$.
We proceed by induction on $j$ to show that $i_j = j$ for all $1 \leq j \leq m$.

First, suppose that $i_1 \neq 1$.
Define a substitution $\varphi\colon \{x_1, x_2, \ldots, x_m\} \to \mathbf{M}_2(S_7)$ by
\[
\varphi(x_1) = \begin{bmatrix} 1 & 1 \\ a & a \end{bmatrix}, \quad
\varphi(x) = [1]_2 \text{ for } x \neq x_1.
\]
Then $\varphi(\mathbf{v}) = \begin{bmatrix} 1 & 1 \\ a & a \end{bmatrix}$ and $\varphi(\mathbf{u}) = [\infty]_2$, which would imply $[\infty]_2 \leq \begin{bmatrix} 1 & 1 \\ a & a \end{bmatrix}$, a contradiction. Hence $i_1 = 1$.

Now assume that $i_1 = 1, \dots, i_{j-1} = j-1$ for some $j$ with $2 \leq j \leq m$,
and suppose that $i_j \neq j$.
Define a substitution $\varphi\colon \{x_1, x_2, \ldots, x_m\} \to \mathbf{M}_2(S_7)$ by
\[
\varphi(x_{j-1}) = \begin{bmatrix} 1 & a \\ 1 & a \end{bmatrix}, \quad
\varphi(x_j) = \begin{bmatrix} a & a \\ 1 & 1 \end{bmatrix}, \quad
\varphi(x) = [1]_2 \text{ for all other } x.
\]
Then $\varphi(\mathbf{u}) = [\infty]_2$ and $\varphi(\mathbf{v}) = [a]_2$, yielding $[\infty]_2 \leq [a]_2$, a contradiction. Thus $i_j = j$.

By induction, $i_j = j$ for all $1 \leq j \leq m$. Therefore $\mathbf{u} = \mathbf{v}$, and consequently $\mathbf{v}$ is minimal for $\mathbf{M}_2(S_7)$.
\end{proof}

\begin{pro}\label{pro26060420}
The interval $[\mathsf{V}(S_7), \mathsf{V}(\mathbf{M}_2(S_7))]$ contains at least countably infinitely many distinct varieties.
\end{pro}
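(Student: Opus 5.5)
We build the infinitely many varieties from flat semirings on linear words, using Proposition~\ref{pro26053150} and Lemma~\ref{iso}. For each $m\geq 1$ let $\bw_m = x_1x_2\cdots x_m$ and put
\[
\mathcal{V}_m=\mathsf{V}(S_7\times S(\bw_m)),
\]
the join of $\mathsf{V}(S_7)$ and $\mathsf{V}(S(\bw_m))$. By Proposition~\ref{pro26053150}, $\bw_m$ is minimal for $\mathbf{M}_2(S_7)$. Lemma~\ref{iso} then gives $S(\bw_m)\in\mathsf{V}(\mathbf{M}_2(S_7))$. Since $S_7$ embeds into $\mathbf{M}_2(S_7)$, we conclude $\mathcal{V}_m\in[\mathsf{V}(S_7),\mathsf{V}(\mathbf{M}_2(S_7))]$. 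Also $S(\bw_m)$ is a subalgebra of $S(\bw_{m+1})$, because subwords of $\bw_m$ are subwords of $\bw_{m+1}$ and products agree. Hence $\mathcal{V}_1\leq\mathcal{V}_2\leq\cdots$, and it remains only to show that this chain is strictly increasing.

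To separate $\mathcal{V}_m$ from $\mathcal{V}_{m+1}$ we use the identity
\[
\sigma_m\colon\quad x_1x_2\cdots x_{m+1}\approx x_1x_2\cdots x_{m+1}+x_{m+1}x_m\cdots x_1,
\]
or any identity of the form $\bw_{m+1}\approx\bw_{m+1}+\bu$ with $\bu\neq\bw_{m+1}$ chosen to hold in $S_7$. Since $\bw_{m+1}$ is an isoterm for $S(\bw_{m+1})$ (Lemma~\ref{iso} applied to $S(\bw_{m+1})$ itself), $\sigma_m$ fails in $\mathcal{V}_{m+1}$. In $S_7$ the multiplication is commutative, so both sides of $\sigma_m$ evaluate to the same element. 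Thus $\sigma_m$ holds in $S_7$.

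The main work is to check that $\sigma_m$ holds in $S(\bw_m)$. Take any substitution into $S(\bw_m)$. If some variable is sent to $0$, both sides are $0$. Otherwise each $x_i$ goes to a nonempty word, so the left side is a product of $m+1$ nonempty words. This product has length at least $m+1>m$, so it cannot be a subword of $\bw_m$, and it equals $0$. Hence the left side is $0$, the additive top, and the identity holds.

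This shows $\sigma_m\in\mathrm{Id}(\mathcal{V}_m)\setminus\mathrm{Id}(\mathcal{V}_{m+1})$, so $\mathcal{V}_m<\mathcal{V}_{m+1}$. The varieties $\mathcal{V}_m$ are therefore pairwise distinct and all lie in the interval, which proves the proposition.
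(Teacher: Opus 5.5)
Your proof is correct and follows the paper's argument. You build the same chain $\mathcal{V}_m=\mathsf{V}(S_7)\vee\mathsf{V}(S(\mathbf{w}_m))$ from Proposition~\ref{pro26053150} and Lemma~\ref{iso}, ordered via the embedding $S(\mathbf{w}_m)\hookrightarrow S(\mathbf{w}_{m+1})$. The only difference is that you separate consecutive members by $\mathbf{w}_{m+1}\approx\mathbf{w}_{m+1}+x_{m+1}\cdots x_1$ instead of the paper's $y_1y_2y_3\cdots y_{k+1}\approx y_2y_1y_3\cdots y_{k+1}$, and both identities work for the same reasons: commutativity of $S_7$, and vanishing of $(m+1)$-fold products in $S(\mathbf{w}_m)$.
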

\begin{proof}
For each integer $k \geq 1$, let $\mathbf{w}_k$ denote the linear word $x_1 x_2 \cdots x_k$.
By Proposition~\ref{pro26053150} and Lemma~\ref{iso}, the flat semiring $S(\mathbf{w}_k)$ belongs to the variety $\mathsf{V}(\mathbf{M}_2(S_7))$.
Let $\mathcal{V}_k$ denote the join of $\mathsf{V}(S_7)$ and $\mathsf{V}(S(\mathbf{w}_k))$. Then $\mathcal{V}_k$ lies in the interval $[\mathsf{V}(S_7), \mathsf{V}(\mathbf{M}_2(S_7))]$.

Since $S(\mathbf{w}_k)$ embeds into $S(\mathbf{w}_{k+1})$, it follows that $\mathcal{V}_k \leq \mathcal{V}_{k+1}$.
Moreover, one can easily verify that the identity
\[
y_1 y_2 y_3 \cdots y_k y_{k+1} \approx y_2 y_1 y_3 \cdots y_k y_{k+1}
\]
holds in $\mathcal{V}_k$ but fails in $\mathcal{V}_{k+1}$,
because the multiplicative reduct of $S_7$ is commutative,
in $S(\mathbf{w}_k)$ the product of any $k+1$ elements is the multiplicative zero,
while in $S(\mathbf{w}_{k+1})$ the product $x_1x_2x_3\cdots x_kx_{k+1}\neq 0$, but $x_2x_1x_3\cdots x_kx_{k+1}=0$.
Hence $\mathcal{V}_k$ is a proper subvariety of $\mathcal{V}_{k+1}$.

Thus we obtain an infinite strictly ascending chain
\[
\mathcal{V}_1 < \mathcal{V}_2 < \cdots < \mathcal{V}_k < \mathcal{V}_{k+1} < \cdots
\]
in the interval $[\mathsf{V}(S_7), \mathsf{V}(\mathbf{M}_2(S_7))]$, which proves the required result.
\end{proof}

\begin{cor}
Let $n\geq 2$ be an integer.
The interval $[\mathsf{V}(S_7), \mathsf{V}(\mathbf{M}_n(S_7))]$ contains at least countably infinitely many distinct varieties.
\end{cor}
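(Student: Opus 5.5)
The plan is to push the infinite chain of Proposition~\ref{pro26060420} upward along the embedding chain of Corollary~\ref{coro26052420}. By Theorem~\ref{thm1} (iterated), $\mathbf{M}_2(S_7)$ embeds into $\mathbf{M}_n(S_7)$ for every $n\geq 2$, so
\[
\mathsf{V}(S_7)\leq \mathsf{V}(\mathbf{M}_2(S_7))\leq \mathsf{V}(\mathbf{M}_n(S_7)).
\]
Hence every variety lying between $\mathsf{V}(S_7)$ and $\mathsf{V}(\mathbf{M}_2(S_7))$ also lies between $\mathsf{V}(S_7)$ and $\mathsf{V}(\mathbf{M}_n(S_7))$. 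In other words, the interval $[\mathsf{V}(S_7), \mathsf{V}(\mathbf{M}_2(S_7))]$ is contained in $[\mathsf{V}(S_7), \mathsf{V}(\mathbf{M}_n(S_7))]$.

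Concretely, I would take the varieties $\mathcal{V}_k=\mathsf{V}(S_7)\vee\mathsf{V}(S(\mathbf{w}_k))$ constructed in the proof of Proposition~\ref{pro26060420}. Each satisfies $\mathsf{V}(S_7)\leq\mathcal{V}_k\leq\mathsf{V}(\mathbf{M}_2(S_7))\leq \mathsf{V}(\mathbf{M}_n(S_7))$. They are pairwise distinct because they form the strictly ascending chain $\mathcal{V}_1<\mathcal{V}_2<\cdots$. This yields countably infinitely many distinct members of $[\mathsf{V}(S_7), \mathsf{V}(\mathbf{M}_n(S_7))]$.

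There is no real obstacle here. The only point to check is transitivity of the embedding from $\mathbf{M}_2(S_7)$ into $\mathbf{M}_n(S_7)$, which is a straightforward induction on $n$ using Theorem~\ref{thm1}. The case $n=2$ is Proposition~\ref{pro26060420} itself.
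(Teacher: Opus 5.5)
Your proposal is correct and matches the paper's proof. The paper likewise combines Proposition~\ref{pro26060420} with the inclusion $\mathsf{V}(\mathbf{M}_2(S_7)) \leq \mathsf{V}(\mathbf{M}_n(S_7))$ from the embedding chain (Corollary~\ref{cor26060320}, ultimately Theorem~\ref{thm1}), so that the strictly ascending chain $\mathcal{V}_1<\mathcal{V}_2<\cdots$ lands in the larger interval.
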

\begin{proof}
This follows from Proposition~\ref{pro26060420} and Corollary~\ref{cor26060320}.
\end{proof}

\section{The multiplicative reduct of $\mathbf{M}'_n(S_7)$}
Although the finite basis problem for $\mathbf{M}_n(S_7)$ ($n \geq 2$) has been solved,
it remains unknown whether $\mathsf{V}(\mathbf{M}_n(S_7)) = \mathsf{V}(\mathbf{M}_{n+1}(S_7))$ for all $n \geq 2$.
Let $\mathbf{M}'_n(S_7)$ denote the set $\mathbf{M}_n(S_7) \setminus \{[1]_n\}$.
It is easy to see that $\mathbf{M}'_n(S_7)$ forms a subsemiring of $\mathbf{M}_n(S_7)$.
In this section, we shall show that the multiplicative reduct of $\mathbf{M}'_n(S_7)$ is a $5$-nilpotent semigroup;
that is, the product of arbitrary five matrices in $\mathbf{M}'_n(S_7)$ is $[\infty]_n$.
This suggests that $\mathsf{V}(\mathbf{M}_n(S_7)) = \mathsf{V}(\mathbf{M}_{n+1}(S_7))$ for all $n \geq 2$;
that is, the ascending chain in Corollary~\ref{cor26060320} may stabilize already at $\mathsf{V}(\mathbf{M}_2(S_7))$.

\begin{pro}\label{pro26041302}
Let $n \geq 2$ be an integer, and let $A$ be a matrix in $\mathbf{M}_n(S_7)$.
\begin{enumerate}[label=(\arabic*), font=\normalfont]
\item
If $A_{ij}=\infty$ for some $1\leq i, j \leq n$,
then for every matrix $B \in \mathbf{M}_n(S_7)$,
every entry of the $i$-th row of $AB$ is $\infty$.

\item If $A_{ij}=\infty$ for some $1\leq i, j \leq n$,
then for every matrix $B \in \mathbf{M}_n(S_7)$,
every entry of the $j$-th column of $BA$ is $\infty$.
\end{enumerate}
\end{pro}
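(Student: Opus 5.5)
The plan is to work directly from the definition of matrix multiplication, using only that $\infty$ is absorbing in $S_7$ for both operations. From the Cayley tables, $\infty\cdot x = x\cdot\infty=\infty$ and $\infty + x = \infty$ for every $x\in S_7$. This is the content of part (1) of the first proposition of this section, applied entrywise in $S_7$ itself.

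For part (1), I will fix $B\in\mathbf{M}_n(S_7)$ and an arbitrary column index $l$, and expand
\[
(AB)_{il}=\sum_{k=1}^{n} A_{ik}B_{kl}.
\]
The summand with $k=j$ is $A_{ij}B_{jl}=\infty\cdot B_{jl}=\infty$, since $\infty$ is a multiplicative zero of $S_7$. Because $\infty$ is the additive maximum (absorbing for $+$), the whole sum equals $\infty$. Since $l$ was arbitrary, the entire $i$-th row of $AB$ consists of $\infty$'s.

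Part (2) is the mirror image. For an arbitrary row index $l$, I will write
\[
(BA)_{lj}=\sum_{k=1}^{n} B_{lk}A_{kj},
\]
and observe that the summand with $k=i$ is $B_{li}A_{ij}=B_{li}\cdot\infty=\infty$. Hence $(BA)_{lj}=\infty$ for all $l$, so the $j$-th column of $BA$ is entirely $\infty$.

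There is no genuine obstacle here. The only point to state explicitly is that a single summand equal to $\infty$ forces the whole sum to be $\infty$, which follows from $x+\infty=\infty$ in $S_7$. This argument does not use idempotence or $0$-cancellativity.
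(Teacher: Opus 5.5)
Your proof is correct and is the same argument as the paper's: the $k=j$ summand of $(AB)_{il}$ (respectively the $k=i$ summand of $(BA)_{lj}$) equals $\infty$, and $\infty$ absorbs the whole sum in $S_7$. The only nitpick is that the absorbing property of $\infty$ comes straight from the Cayley tables of $S_7$. The earlier proposition about $[\infty]_n$ concerns matrices, so you do not need to cite it.
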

\begin{proof}
Suppose that $A_{ij}=\infty$ for some $1\leq i, j \leq n$. Then
\[
(AB)_{is}=\sum_{k=1}^{n} A_{ik}B_{ks}=\infty
\]
for all $1\leq s\leq n$;
that is,
every entry of the $i$-th row of $AB$ is $\infty$. Similarly, We also have every entry of the $j$-th column of $BA$ is $\infty$.
\end{proof}

As a consequence, we have
\begin{cor}\label{coro26060350}
Let $n \geq 2$ be an integer, and let $A$ be a matrix in $\mathbf{M}_n(S_7)$.
\begin{itemize}
\item[$(1)$] If every row of $A$ contains an entry equal to $\infty$,
then for every matrix $B \in \mathbf{M}_n(S_7)$, $AB=[\infty]_n$.

\item[$(2)$] If every column of $A$ contains an entry equal to $\infty$,
then for every matrix $B \in \mathbf{M}_n(S_7)$, $BA=[\infty]_n$.
\end{itemize}
\end{cor}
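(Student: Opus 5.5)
We will derive Corollary~\ref{coro26060350} directly from Proposition~\ref{pro26041302} by applying it once to each row (respectively, each column) of $A$. Since every entry of a product is determined by one row of the left factor and one column of the right factor, this suffices.

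For part $(1)$, fix an arbitrary $B \in \mathbf{M}_n(S_7)$ and an arbitrary row index $1 \leq i \leq n$. By hypothesis the $i$-th row of $A$ contains an entry equal to $\infty$, so there is some $j_i$ with $A_{i j_i} = \infty$. Proposition~\ref{pro26041302}(1), applied to the pair $(i, j_i)$, shows that every entry of the $i$-th row of $AB$ is $\infty$. As $i$ ranges over $1, \dots, n$, every row of $AB$ consists entirely of $\infty$'s, so $AB = [\infty]_n$.

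Part $(2)$ is the column-wise mirror image. For each column index $j$, choose $i_j$ with $A_{i_j j} = \infty$. Proposition~\ref{pro26041302}(2) then gives that the $j$-th column of $BA$ is entirely $\infty$, for every $B$. Letting $j$ range over $1, \dots, n$ yields $BA = [\infty]_n$.

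There is no real obstacle here. The only point requiring care is that the witness column $j_i$ (respectively, row $i_j$) may depend on $i$ (respectively, $j$). Proposition~\ref{pro26041302} is stated for an arbitrary pair of indices, so this dependence causes no difficulty. The underlying reason, already used in that proposition, is that $\infty$ is absorbing for both operations of $S_7$: $\infty \cdot x = x \cdot \infty = \infty$ and $\infty + x = \infty$.
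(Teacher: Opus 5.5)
Your proposal is correct and follows the paper's route exactly. The paper states the corollary as an immediate consequence of Proposition~\ref{pro26041302}, and you apply part (1) to each row, or part (2) to each column, with the witness index allowed to vary.
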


The following result describes the annihilators of the multiplicative reducts of $\mathbf{M}_n(S_7)$.
\begin{cor}\label{cor3.8}
Let $n \geq 2$ be an integer, and let $A$ be a matrix in $\mathbf{M}_n(S_7)$.
Then $AB=BA=[\infty]_n$ for all $B\in \mathbf{M}_n(S_7)$
if and only if every row and column of $A$ contains an entry equal to $\infty$.
\end{cor}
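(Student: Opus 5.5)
The \emph{if} direction follows from Corollary~\ref{coro26060350}. If every row of $A$ contains an $\infty$, then part~(1) gives $AB=[\infty]_n$ for all $B$. If every column of $A$ contains an $\infty$, then part~(2) gives $BA=[\infty]_n$ for all $B$. Nothing more is needed for this direction.

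For the \emph{only if} direction I argue by contraposition, and I will exhibit the witness $B=[1]_n$ in both cases.

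Suppose first that some row $i$ of $A$ has no entry equal to $\infty$. Then every entry of that row lies in $\{1,a\}$. Compute
\[
(A[1]_n)_{ij}=\sum_{k=1}^n A_{ik}\cdot 1=\sum_{k=1}^n A_{ik}.
\]
A sum of elements of $\{1,a\}$ in $S_7$ equals $\infty$ only when both $1$ and $a$ occur among the summands. If they do, this choice of $B$ does not immediately give a nonzero product, so the witness has to be adjusted. Since $A_{ik}\in\{1,a\}$ for all $k$, choose $B$ whose $j$-th column is $B_{kj}=1$ when $A_{ik}=a$ and $B_{kj}=a$ when $A_{ik}=1$. Then every product $A_{ik}B_{kj}$ equals $a$, and so $(AB)_{ij}=a\neq\infty$. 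The remaining entries of $B$ may be arbitrary, say all equal to $1$. This shows $AB\neq[\infty]_n$.

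Now suppose instead that some column $j$ of $A$ has no entry equal to $\infty$. The symmetric construction applies: pick a row of $B$ complementary to column $j$ of $A$, and then $(BA)_{ij}=a$ for that row index $i$. Hence $BA\neq[\infty]_n$.

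Combining the two cases gives the contrapositive. If some row or some column of $A$ avoids $\infty$, then $AB\neq[\infty]_n$ or $BA\neq[\infty]_n$ for a suitable $B$. This is exactly the failure of ``$AB=BA=[\infty]_n$ for all $B$.''

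The only delicate step is choosing the witness $B$ so that the relevant entry avoids $\infty$. A naive choice such as $B=[1]_n$ fails when the row mixes $1$ and $a$. The complementary choice uses Proposition~\ref{pro3.33}(2) in reverse, making each summand equal to $a$, and this resolves the issue.
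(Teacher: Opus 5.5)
Your proof is correct and is essentially the paper's argument: the \emph{if} direction via Corollary~\ref{coro26060350}, and the \emph{only if} direction by choosing a column (resp.\ row) of $B$ complementary to an $\infty$-free row (resp.\ column) of $A$, so that every summand equals $a$. The paper fills the remaining entries of $B$ with $\infty$ instead of $1$, which makes no difference since they do not affect the relevant entry. You should delete your opening claim that $B=[1]_n$ will be the witness, since you then show yourself that it fails when the row mixes $1$ and $a$.
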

\begin{proof}
If every row and column of $A$ contains an entry equal to $\infty$,
then by Corollary~\ref{coro26060350}, $AB=BA=[\infty]_n$ for all $B\in \mathbf{M}_n(S_7)$.

Conversely, suppose $AB = BA = [\infty]_n$ for all $B \in \mathbf{M}_n(S_7)$.
Assume, for contradiction, that there exists $1\leq i \leq n$ such that the $i$-th row of $A$ contains no entry equal to $\infty$.
Then each entry in this row is either $1$ or $a$.
Take a matrix $B$ in $\mathbf{M}_n(S_7)$ with
\[
B_{k1} =
\begin{cases}
1 & \text{if } A_{ik} = a, \\
a & \text{if } A_{ik} = 1,
\end{cases}
\quad \text{for } k = 1, \dots, n,
\]
and all other entries equal to $\infty$.
Then
\[
(AB)_{i1} = \sum_{k=1}^{n} A_{ik} B_{k1} = \sum_{k=1}^{n} a=a,
\]
and so $AB \neq   [\infty]_n$, a contradiction.
Hence every row of $A$ must contain an entry equal to $\infty$.
A symmetric argument using $BA = [\infty]_n$ shows that every column of $A$ also contains an entry equal to $\infty$.
\end{proof}

\begin{cor}\label{lem3}
Let $n \geq 2$ be an integer, and let $A$ and $B$ be matrices in $\mathbf{M}_n(S_7)$.
If there exists $1 \leq i \leq n$ such that for all $1 \leq j \leq n$,
\[
(AB)_{ij} = (BA)_{ij} = a,
\]
then $A$ and $B$ are constant matrices. More precisely, either $A=[1]_n$ and $B=[a]_n$, or $A=[a]_n$ and $B=[1]_n$.
\end{cor}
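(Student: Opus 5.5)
The plan is to translate both hypotheses into entrywise conditions using Proposition~\ref{pro3.33}(2), and then show that these conditions force all rows of $A$ and of $B$ to be constant and equal to one another. Write $\overline{1}=a$ and $\overline{a}=1$ for the swap on $\{1,a\}$.

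First, fix the index $i$ from the statement. For every $j$, the condition $(AB)_{ij}=a$ gives, by Proposition~\ref{pro3.33}(2), $\{A_{ik},B_{kj}\}=\{a,1\}$ for all $1\le k\le n$. Hence every entry of the $i$-th row of $A$ lies in $\{1,a\}$, and $B_{kj}=\overline{A_{ik}}$ for all $k,j$. Since the right-hand side does not depend on $j$, each row $k$ of $B$ is constant, equal to $c_k:=\overline{A_{ik}}\in\{1,a\}$.

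Symmetrically, for every $j$ the condition $(BA)_{ij}=a$ gives $\{B_{ik},A_{kj}\}=\{a,1\}$ for all $k$. So each row $k$ of $A$ is constant, equal to $d_k:=\overline{B_{ik}}\in\{1,a\}$.

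Next, combine the two descriptions. Because row $i$ of $A$ is constant, $A_{ik}=d_i$ for every $k$. Therefore $c_k=\overline{d_i}$ for all $k$, so $B$ is the constant matrix $[\overline{d_i}]_n$. In particular $B_{ik}=\overline{d_i}$ for every $k$, which gives $d_k=\overline{\overline{d_i}}=d_i$ for all $k$. Hence $A=[d_i]_n$ and $B=[\overline{d_i}]_n$. The two possible values $d_i=1$ and $d_i=a$ give exactly the alternatives $A=[1]_n$, $B=[a]_n$ and $A=[a]_n$, $B=[1]_n$.

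There is no genuine obstacle here. The only point needing care is the bookkeeping of which index is the row index. The $AB$ condition controls the rows of $B$ through row $i$ of $A$, while the $BA$ condition controls the rows of $A$ through row $i$ of $B$. Using both hypotheses, rather than only one, is what closes the loop and forces both matrices to be constant.
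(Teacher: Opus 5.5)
Your proof is correct and follows essentially the same route as the paper: both arguments get row-constancy of $B$ from the $AB$ condition and row-constancy of $A$ from the $BA$ condition, and then tie the two together. The differences are minor. You use the complement relation of Proposition~\ref{pro3.33}(2) directly, where the paper uses $0$-cancellativity and then compares diagonal entries. As a result you read off the values $[1]_n$ and $[a]_n$ immediately, without the paper's appeal to Corollary~\ref{cor3.8} and its remark that $A\neq B$.
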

\begin{proof}
Assume that for some fixed index $i$ and for every $j$,
$(AB)_{ij} = (BA)_{ij} = a$. Then
\[
(AB)_{ij} = \sum_{k=1}^{n} A_{ik}B_{kj} = a,\quad
(BA)_{ij} = \sum_{k=1}^{n} B_{ik}A_{kj} = a.
\]
In $S_7$, a sum equals $a$ only if every summand equals $a$. Hence, for all $j$ and each $k$,
\[
A_{ik}B_{kj} = a \quad \text{and} \quad B_{ik}A_{kj} = a;
\]
that is,
\begin{equation}\label{id26060405}
A_{ik}B_{k1} = A_{ik}B_{k2} = \dots = A_{ik}B_{kn} = a
\end{equation}
and
\begin{equation}\label{id26060406}
B_{ik}A_{k1} = B_{ik}A_{k2}= \cdots = B_{ik}A_{kj}=a.
\end{equation}
Since the multiplicative reduct of $S_7$ is $0$-cancellative, it follows from \eqref{id26060405} that
\[
B_{k1} = B_{k2} = \dots = B_{kn}
\]
for each $k$; that is, $B$ is a row-constant matrix.
Furthermore, from $A_{il}B_{ll} = a$ and $B_{ii}A_{il} = a$ for each $l$, we obtain
\[
B_{11} = B_{22} = \dots = B_{nn}.
\]
Thus $B$ is a constant matrix.
Similarly, from \eqref{id26060406} one can show that $A$ is also a constant matrix.
By Corollary~\ref{cor3.8}, we conclude that $A \neq [\infty]_n$ and $B \neq [\infty]_n$.
Moreover, it is easy to see that $A \neq B$.
Therefore, either $A = [1]_n$ and $B = [a]_n$, or $A=[a]_n$ and $B=[1]_n$.
\end{proof}


\begin{pro}\label{prop3.8}
Let $n\geq 2$ be an integer, and let $A$ be a matrix in $\mathbf{M}'_n(S_7)$.
Then every row and column of $A^2$ contains at least one entry equal to $\infty$.
\end{pro}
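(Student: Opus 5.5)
The plan is to argue by contradiction, handling rows first and then deducing the column statement by transposition. Suppose $A\in\mathbf{M}'_n(S_7)$, so $A\neq[1]_n$, and suppose some row $i$ of $A^2$ contains no entry equal to $\infty$. Then every entry $(A^2)_{ij}$, $1\le j\le n$, lies in $\{1,a\}$. The argument has two steps: first rule out the value $1$ in this row, then rule out a row consisting entirely of $a$'s.

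\emph{Step 1: no entry of row $i$ equals $1$.} Suppose $(A^2)_{ij}=1$ for some $j$. By Proposition~\ref{pro3.33}(1), the $i$-th row of $A$ consists entirely of $1$'s. Then, for every $l$,
\[
(A^2)_{il}=\sum_{k=1}^n A_{ik}A_{kl}=\sum_{k=1}^n A_{kl}.
\]
In $S_7$ this sum lies in $\{1,a\}$ only if the $l$-th column of $A$ is constant with value $1$ or $a$. That column contains $A_{il}=1$, so the whole $l$-th column consists of $1$'s. Since $l$ was arbitrary, $A=[1]_n$, contradicting $A\in\mathbf{M}'_n(S_7)$.

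\emph{Step 2: row $i$ cannot be all $a$'s.} By Step 1, $(A^2)_{ij}=a$ for all $1\le j\le n$. Apply Corollary~\ref{lem3} with $B=A$; its hypothesis $(AB)_{ij}=(BA)_{ij}=a$ for all $j$ holds trivially because $AB=BA=A^2$. The corollary then gives either $A=[1]_n$ and $A=[a]_n$, or $A=[a]_n$ and $A=[1]_n$, both of which are impossible. Hence every row of $A^2$ contains an entry equal to $\infty$.

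\emph{Columns.} Since multiplication in $S_7$ is commutative, transposition satisfies $(XY)^T=Y^TX^T$ in $\mathbf{M}_n(S_7)$. In particular $(A^T)^2=(A^2)^T$. Moreover $A^T\neq[1]_n$, so $A^T\in\mathbf{M}'_n(S_7)$. Applying the row statement to $A^T$ shows that every column of $A^2$ contains an $\infty$.

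The only delicate point is Step 1: we must use the condition on the whole row $i$ of $A^2$, not just the single entry equal to $1$, in order to force every column of $A$ to be all $1$'s. Step 2 is exactly the degenerate case $B=A$ of Corollary~\ref{lem3}.
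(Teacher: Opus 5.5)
Your proof is correct. It differs from the paper's only in how an all-$a$ row is excluded, plus an explicit column argument.

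The paper starts from the diagonal entry. The sum $(A^2)_{ii}=\sum_k A_{ik}A_{ki}$ contains the summand $A_{ii}A_{ii}\in\{1,\infty\}$, since $a\cdot a=\infty$. So $(A^2)_{ii}$ is never $a$, and the $i$-th row and $i$-th column of $A^2$ always contain an $\infty$ or a $1$. Only the case $(A^2)_{ii}=1$ then needs work, and that is essentially your Step~1: row $i$ of $A$ is all $1$'s, so row $i$ of $A^2$ has entries in $\{1,\infty\}$, and an all-$1$ row forces $A=[1]_n$ by Corollary~\ref{lem3.7}.

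You instead rule out the value $1$ first, then dispose of an all-$a$ row by specializing Corollary~\ref{lem3} to $B=A$. This is legitimate: the corollary precedes the proposition, and its conclusion cannot hold when $B=A$. But it sends a one-line fact through the row-constancy and $0$-cancellativity argument of Corollary~\ref{lem3}. That corollary's impossibility at $B=A$ rests on the same fact the paper uses directly, namely that $a$ is not a square in $S_7$.

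Your column argument is a clean, explicit replacement for the paper's ``similarly''. It is valid because multiplication in $S_7$ is commutative, so $(A^T)^2=(A^2)^T$, and $A^T\neq[1]_n$.
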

\begin{proof}
For any $1 \leq i \leq n$, we have that $(A^2)_{ii} = \sum_{k=1}^{n} A_{ik}A_{ki}.$
Since $A_{ii}A_{ii}=\infty$ or $1$, it follows immediately that $(A^2)_{ii} =\infty$ or $1$;
that is, the diagonal entries of $A^2$ are either $\infty$ or $1$.

If $(A^2)_{ii} = \infty$, then the $i$-th row and $i$-th column of $A^2$ both contain at least one entry equal to $\infty$.
Now suppose that $(A^2)_{ii}=1$.
Then by Proposition~\ref{pro3.33}, $A_{ik} = A_{ki} = 1$ for all $1\leq k\leq n$.
Assume, for contradiction, that $(A^2)_{ij} \neq \infty$ for all $1\leq j \leq n$. Then
\[
(A^2)_{ij} = \sum_{k=1}^{n} A_{ik} A_{kj} =\left(\sum_{k=1}^{n} A_{ik} A_{kj}\right)+ A_{ii} A_{ij}
=\left(\sum_{k=1}^{n} A_{ik} A_{kj}\right)+1\cdot 1
=1
\]
for each $j$,
which by Corollary~\ref{lem3.7} forces $A =[1]_n$. This contradicts the fact that $A \in \mathbf{M}'_n(S_7)$.
Hence the $i$-th row of $A^2$ contains at least one entry equal to $\infty$.
Similarly, the $i$-th column of $A^2$ contains at least one entry equal to $\infty$.
This completes the proof.
\end{proof}

\begin{cor}
Let $n \geq 2$ be an integer.
Then $[1]_n$ and $[\infty]_n$ are the only idempotent matrices in $\mathbf{M}_n(S_7)$.
\end{cor}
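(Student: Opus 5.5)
Both $[1]_n$ and $[\infty]_n$ are idempotent: $[1]_n[1]_n$ has every entry equal to $\sum_k 1\cdot 1=1$, and $[\infty]_n$ is the multiplicative zero. The work is to show there are no others. Let $A\in\mathbf{M}_n(S_7)$ satisfy $A^2=A$ and $A\neq[1]_n$, so $A\in\mathbf{M}'_n(S_7)$. We will prove $A=[\infty]_n$.

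By Proposition~\ref{prop3.8}, every row and every column of $A^2$ contains an entry equal to $\infty$. Since $A^2=A$, every row of $A$ then contains an $\infty$. Corollary~\ref{coro26060350}(1) now gives $AB=[\infty]_n$ for every $B\in\mathbf{M}_n(S_7)$. Taking $B=A$ yields $A=A^2=[\infty]_n$, which is what we want.

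The argument is short; the one point to get right is that $A^2=A$ transfers the $\infty$-pattern of $A^2$ from Proposition~\ref{prop3.8} to $A$ itself, after which the annihilation in Corollary~\ref{coro26060350} applies directly. For an alternative route, Corollary~\ref{cor3.8} could be used in place of Corollary~\ref{coro26060350}, since by Proposition~\ref{prop3.8} every row and column of $A$ contains an $\infty$. I expect no real obstacle here.
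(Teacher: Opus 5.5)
Your proof is correct and is the paper's own argument: for an idempotent $A\neq[1]_n$, Proposition~\ref{prop3.8} applied to $A=A^2$ puts an $\infty$ in every row of $A$, and Corollary~\ref{coro26060350} then forces $A=A^2=[\infty]_n$. Your explicit check that $[1]_n$ and $[\infty]_n$ are themselves idempotent fills in a step the paper leaves implicit.
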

\begin{proof}
Let $A$ be an idempotent matrix in $\mathbf{M}_n(S_7)$ with $A \neq [1]_n$.
Then $A \in \mathbf{M}'_n(S_7)$ and $A = A^2$.
By Proposition~\ref{prop3.8}, every row of $A$ contains an entry equal to $\infty$.
It then follows from Corollary~\ref{coro26060350} that $A^2 = [\infty]_n$, and consequently $A = [\infty]_n$.
Hence $[1]_n$ and $[\infty]_n$ are the only idempotent matrices in $\mathbf{M}_n(S_7)$.
\end{proof}

Recall that a \emph{nil-semigroup} is a semigroup $S$ with zero $0$ such that for every element $a \in S$,
there exists a positive integer $n$ such that $a^n = 0$.

\begin{cor}\label{cor3.9}
Let $n \geq 2$ be an integer.
The multiplicative reduct of $\mathbf{M}'_n(S_7)$ is a nil-semigroup. More precisely, $A^3 = [\infty]_n$ for every $A \in \mathbf{M}'_n(S_7)$, but $A^2 \neq [\infty]_n$ for some $A \in \mathbf{M}'_n(S_7)$.
\end{cor}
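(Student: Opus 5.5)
The proof has two parts: show that $A^3=[\infty]_n$ for every $A\in\mathbf{M}'_n(S_7)$, and exhibit an $A$ with $A^2\neq[\infty]_n$.

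For the first part, let $A\in\mathbf{M}'_n(S_7)$. By Proposition~\ref{prop3.8}, every row (and every column) of $A^2$ contains an entry equal to $\infty$. Corollary~\ref{coro26060350}(1), applied to $A^2$ with $B=A$, then gives $A^3=A^2A=[\infty]_n$. Since $[\infty]_n$ is the multiplicative zero, this already shows that the multiplicative reduct of $\mathbf{M}'_n(S_7)$ is a nil-semigroup.

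For the second part, I will take $A=[a]_n$, which lies in $\mathbf{M}'_n(S_7)$ because it differs from $[1]_n$. Every entry of $A^2$ is $\sum_k a\cdot a=\infty$, so $[a]_n^2=[\infty]_n$ and this choice fails. I will instead use the matrix $A$ whose first row consists entirely of $1$'s and whose remaining rows consist entirely of $a$'s. For $n\geq 2$ this $A$ is not $[1]_n$.

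I then compute the $(1,j)$ entry of $A^2$:
\[
(A^2)_{1j}=\sum_k A_{1k}A_{kj}=A_{11}A_{1j}+\sum_{k\geq 2}1\cdot a=1+a=\infty,
\]
so the first row is all $\infty$. The other rows are also all $\infty$, since each contains terms $a\cdot 1=a$ and $a\cdot a=\infty$. Thus this $A$ fails as well.

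The main obstacle is therefore choosing the right witness; I try candidates in turn. A matrix with a single $\infty$ entry seems natural. Take $n=2$ and
\[
A=\begin{bmatrix}1&1\\1&\infty\end{bmatrix}.
\]
Then $(A^2)_{11}=1\cdot1+1\cdot1=1$, so $A^2\neq[\infty]_2$, and $A\neq[1]_2$.

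For general $n$, I take $A$ to be $[1]_n$ with the single entry $(n,n)$ replaced by $\infty$. Then
\[
(A^2)_{11}=\sum_k A_{1k}A_{k1}=1,
\]
because the first row and first column of $A$ consist entirely of $1$'s. This $A$ lies in $\mathbf{M}'_n(S_7)$ and satisfies $A^2\neq[\infty]_n$, which completes the proof.
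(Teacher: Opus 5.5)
Your proof is correct and follows the paper's argument: Proposition~\ref{prop3.8} together with Corollary~\ref{coro26060350}(1) gives $A^3=A^2A=[\infty]_n$. Your witness, $[1]_n$ with the $(n,n)$ entry changed to $\infty$, works for the same reason as the paper's witness, which changes that entry to $a$: in both cases $(A^2)_{11}=1$. The discarded trial candidates do no harm, but you can drop them from the final write-up.
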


\begin{proof}
Let $A$ be a matrix in $\mathbf{M}'_n(S_7)$. By Proposition~\ref{prop3.8},
every row of $A^2$ contains at least one entry equal to $\infty$.
Then by Corollary~\ref{coro26060350}, it follows that $A^3 = [\infty]_n$.

To see that $A^2 \neq [\infty]_n$ for some $A\in \mathbf{M}'_n(S_7)$, consider the matrix
\[
A = \begin{bmatrix}
1 & 1 & \cdots & 1 & 1 \\
1 & 1 & \cdots & 1 & 1 \\
\vdots & \vdots & \ddots & \vdots & \vdots \\
1 & 1 & \cdots & 1 & 1 \\
1 & 1 & \cdots & 1 & a
\end{bmatrix}.
\]
One readily verifies that $A^2 \neq [\infty]_n$.
\end{proof}

It is well-known that every finite nil-semigroup is nilpotent.
We now determine the precise nilpotency index for the multiplicative reduct of $\mathbf{M}'_n(S_7)$.
\begin{pro}
Let $n\geq 2$ be an integer.
The multiplicative reduct of $\mathbf{M'}_n(S_7)$ is $5$-nilpotent, but is not $4$-nilpotent.
\end{pro}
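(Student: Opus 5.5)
The proof has two parts: an upper bound (any product of five matrices from $\mathbf{M}'_n(S_7)$ equals $[\infty]_n$) and a lower bound (four explicit matrices from $\mathbf{M}'_n(S_7)$ whose product is not $[\infty]_n$). In both I will use only Proposition~\ref{pro3.33}, Corollary~\ref{lem3.7} and Proposition~\ref{lem1}.

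\textbf{Upper bound.} Let $A_1,\dots,A_5\in\mathbf{M}'_n(S_7)$. Put $X=A_1A_2A_3$ and $Y=A_4A_5$, so that $A_1\cdots A_5=XY$.
\begin{enumerate}[label=(\roman*), font=\normalfont]
\item $X$ has no entry equal to $1$. Otherwise Proposition~\ref{lem1} would give $A_2=[1]_n$, contradicting $A_2\in\mathbf{M}'_n(S_7)$. Hence every entry of $X$ lies in $\{a,\infty\}$.
\item No entry of $XY$ equals $1$. If $(XY)_{ij}=1$, Proposition~\ref{pro3.33}(1) forces the $i$-th row of $X$ to consist of $1$'s, which contradicts (i).
\item No entry of $XY$ equals $a$. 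If $(XY)_{ij}=a$, Proposition~\ref{pro3.33}(2) gives $\{X_{ik},Y_{kj}\}=\{a,1\}$ for every $k$. Since $X_{ik}\in\{a,\infty\}$, this means $X_{ik}=a$ and $Y_{kj}=1$ for all $k$. So the $j$-th column of $Y=A_4A_5$ consists entirely of $1$'s. By Corollary~\ref{lem3.7} this yields $A_4=[1]_n$, a contradiction.
\end{enumerate}
Therefore $XY=[\infty]_n$, which shows that the multiplicative reduct is $5$-nilpotent.

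\textbf{Lower bound.} I aim for $(A_1A_2A_3A_4)_{11}=a$. This entry equals $\sum_{k,l,m}(A_1)_{1k}(A_2)_{kl}(A_3)_{lm}(A_4)_{m1}$. Since $a$ is additively minimal in $S_7$, every summand must equal $a$. So each summand must contain exactly one factor $a$, with the remaining three factors equal to $1$. This suggests the following choices.
\begin{itemize}
\item $A_1$: first row all $1$'s, all other entries $\infty$.
\item $A_4$: first column all $1$'s, all other entries $\infty$.
\item $A_2$: first column all $a$'s, all other entries $1$.
\item $A_3$: first row all $1$'s, all other rows all $a$'s.
\end{itemize}

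For fixed $l$, the product $(A_2)_{kl}(A_3)_{lm}$ equals $a\cdot 1=a$ when $l=1$, and $1\cdot a=a$ when $l\ge 2$. Hence every summand is $1\cdot a\cdot 1=a$, and $(A_1A_2A_3A_4)_{11}=a\neq\infty$.

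None of the four matrices equals $[1]_n$: $A_1$ and $A_4$ contain $\infty$, $A_2$ contains $a$, and $A_3$ contains $a$ since $n\ge 2$. So all four lie in $\mathbf{M}'_n(S_7)$, and the reduct is not $4$-nilpotent.

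The step needing the most care was choosing the lower-bound witnesses, since any $\infty$ in a relevant row or column of a partial product destroys the entry. Writing the $(1,1)$ entry as a triple sum, as above, pins down the column/row-constant shape of $A_2$ and $A_3$.
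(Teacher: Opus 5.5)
Your proof is correct and follows essentially the paper's argument. You use the same split $A_1A_2A_3\cdot A_4A_5$, with Proposition~\ref{lem1} forcing the left factor's entries into $\{a,\infty\}$ and Corollary~\ref{lem3.7} then forcing $A_4=[1]_n$. Your lower-bound witnesses differ only cosmetically from the paper's: you swap the roles of $1$ and $a$ in the two middle factors and pad $A_4$ with $\infty$ instead of $a$. The mechanism is the same, namely that every middle product $(A_2)_{kl}(A_3)_{lm}$ equals $a$ regardless of $l$.
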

\begin{proof}
Let $A, B, C, D, E$ be arbitrary matrices in $\mathbf{M'}_n(S_7)$.
By Proposition~\ref{lem1}, every entry of the product $ABCDE$ is either $\infty$ or $a$.
Suppose that $(ABCDE)_{ij}=a$ for some $1\leq i, j\leq n$. Then
\[
\left((ABC)(DE)\right)_{ij} = \sum_{k=1}^{n} (ABC)_{ik} (DE)_{kj} = a.
\]
This implies that $\{(ABC)_{ik}, (DE)_{kj}\}=\{a, 1\}$ for all $1\leq k\leq n$.
Combining this with Proposition~\ref{lem1}, one can deduce that $(ABC)_{ik}=a$ for all $1\leq k\leq n$,
and so $(DE)_{kj}=1$ for all $1\leq k\leq n$.
By Corollary~\ref{lem3.7}, $D=[1]_n$, contradicting $D \in \mathbf{M}'_n(S_7)$.
Thus no such $i, j$ exist, and therefore $ABCDE = [\infty]_n$.
Therefore, the multiplicative reduct of $\mathbf{M'}_n(S_7)$ is $5$-nilpotent.

To see that it is not $4$-nilpotent, consider the matrices
\[
A = \begin{bmatrix}
1 & 1 & \cdots & 1 \\
\infty & \infty & \cdots & \infty \\
\vdots & \vdots & \ddots & \vdots \\
\infty & \infty & \cdots & \infty
\end{bmatrix},\quad
B = \begin{bmatrix}
1 & a & \cdots & a \\
1 & a & \cdots & a \\
\vdots & \vdots & \ddots & \vdots \\
1 & a & \cdots & a
\end{bmatrix},
\]
\[
C = \begin{bmatrix}
a & a & \cdots & a \\
1 & 1 & \cdots & 1 \\
\vdots & \vdots & \ddots & \vdots \\
1 & 1 & \cdots & 1
\end{bmatrix},\quad
D = \begin{bmatrix}
1 & a & \cdots & a \\
1 & a & \cdots & a \\
\vdots & \vdots & \ddots & \vdots \\
1 & a & \cdots & a
\end{bmatrix}.
\]
One can easily check that $(ABCD)_{11} = a$, so $ABCD \neq [\infty]_n$.
Hence the multiplicative reduct is not $4$-nilpotent.
\end{proof}

\section{Conclusion}
We have proved that the matrix semiring $\mathbf{M}_n(S_7)$ is nonfinitely based for every $n \geq 2$, and obtained the following infinite ascending chain:
\[
\mathsf{V}(S_7) < \mathsf{V}(\mathbf{M}_2(S_7)) \leq \mathsf{V}(\mathbf{M}_3(S_7)) \leq \cdots \leq \mathsf{V}(\mathbf{M}_n(S_7)) \leq \mathsf{V}(\mathbf{M}_{n+1}(S_7)) \leq \cdots .
\]
We have further shown that every variety in the interval $[\mathsf{V}(S_7), \mathsf{V}(\mathbf{M}_n(S_7))]$ is nonfinitely based,
and that this interval contains at least countably infinitely many distinct varieties.
Thus, while Problem~\ref{prob26052301} remains open in general,
our results confirm that it has an affirmative answer for all varieties lying in this interval.
A full resolution of the problem, however, still seems far from being achieved.
Nevertheless, the results obtained in this paper represent a modest step towards a complete solution.

Moreover, two natural questions remain open: whether this interval contains uncountably many distinct varieties, and whether $\mathsf{V}(\mathbf{M}_n(S_7)) = \mathsf{V}(\mathbf{M}_{n+1}(S_7))$ holds for all $n \geq 2$. The fact that the multiplicative reduct of $\mathbf{M}'_n(S_7)$ is a $5$-nilpotent semigroup strongly suggests that the latter equality may indeed hold for all $n \geq 2$.

For an arbitrary ai-semiring $S$, it is natural to study the finite basis problem for the matrix semiring $\mathbf{M}_n(S)$ ($n \geq 2$). However, general guiding results are still lacking. The findings presented in this paper, together with those in \cite{jr2025} and the related work of Dolinka~\cite{dolinka2009} and Gusev and Volkov~\cite{gv2510}, serve as a first step in this direction.

Finally, it is natural to ask for which $S$ the ascending chain
\[
\mathsf{V}(S) \leq \mathsf{V}(\mathbf{M}_{2}(S)) \leq \cdots \leq\mathsf{V}(\mathbf{M}_{n}(S))\leq \mathsf{V}(\mathbf{M}_{n+1}(S))\leq\cdots
\]
stabilizes, and at which point it becomes stable.

\subsection*{Acknowledgment}
The authors would like to thank Zidong Gao and Mengya Yue
for their helpful discussions and contributions to this work.
We are also grateful to Professor Mikhail Volkov for his valuable
suggestions and comments on this paper.
Miaomiao Ren, corresponding author, is supported by National Natural Science Foundation of China (Grant Nos. 12371024, 12571020).

\subsection*{Conflict of Interest}
On behalf of all authors, the corresponding author states that there is no conflict of interest.

\bibliographystyle{amsplain}


\end{document}